\newtheorem{defn}{Definition}[section]
\newtheorem{prop}[defn]{Proposition}
\newcommand{\legendre}[2]{\genfrac{(}{)}{}{}{#1}{#2}}
\definecolor{blue}{rgb}{0,0,1}
\definecolor{red}{rgb}{1,0,0}
\definecolor{green}{rgb}{0,.6,.2}
\definecolor{purple}{rgb}{1,0,1}
\long\def\red#1\endred{\textcolor{red}{#1}}
\long\def\blue#1\endblue{\textcolor{blue}{#1}}
\long\def\purple#1\endpurple{\textcolor{purple}{ #1}}
\long\def\green#1\endgreen{\textcolor{green}{#1}}
\newtheorem{lem}[defn]{Lemma}
\newtheorem{thm}[defn]{Theorem}
\newtheorem*{conj*}{Conjecture}
\newtheorem{cor}[defn]{Corollary}
\newcommand {\ZZ}{{\mathbb Z}}
\newcommand {\C}{{\mathbb C}}
\newcommand {\G}{{\Gamma}}
\newcommand {\OO}{{\mathcal O}}
\newcommand {\g}{{\gamma}}
\newcommand {\HH}{{\mathfrak  H}}
\newcommand {\MH}{\mathcal{M}}
\newcommand {\M}{{\mathcal M}}
\newcommand {\RR}{{\mathcal R}}
\newcommand {\MI}{{\mathcal{MI}}}
\def\mod{\operatorname{mod}}
\title{}
\author{ \\
}
\begin{document}

\title{Modular iterated integrals associated with cusp forms}
\date{}
\author
{Nikolaos Diamantis
\\
(University of Nottingham)
}

\maketitle
\begin{abstract}
We construct an explicit family of modular iterated integrals which involves cusp forms. This leads to a new method of producing  "invariant versions" of iterated integrals of modular forms. The construction will be based on an extension of higher-order modular forms which, in contrast to the standard higher-order forms, applies to general Fuchsian groups of the first kind and, as such, is of independent interest.
\end{abstract}

\section{Introduction}

This paper deals with two classes of functions that have not been previously studied together, namely, \emph{modular iterated integrals} and \emph{higher order modular forms}. We show that they are interrelated in a way that key features of one of them can be elucidated through constructions in the other. 

The first class of objects, modular iterated integrals, were introduced recently \cite{BrI, BrII, BrIII} by F. Brown in the context of the theory of \emph{real-analytic modular forms}. Those are real-analytic functions $f$ on the upper half-plane $\mathfrak H$, characterised by \\
i. a transformation law of the form
$$f(\g z)=(cz+d)^r (c \bar z+d)^s f(z), \qquad \text{for all $z \in \mathfrak H,$}$$
and for all $\g = \left ( \begin{smallmatrix} * & * \\ c & d \end{smallmatrix} \right )$ in a suitable group $\G$ and \\
ii. a Fourier series of the form
$$
 f(z)=\sum_{|j| \le M}y^j\left ( \sum_{\substack{m, n \ge 0}} a_{m, n}^{(j)} q^m \bar q^n \right )
$$
for some $m \in \mathbb N$ and $q=e^{2 \pi i z}$. (Precise definition given in Section \ref{basics} )
The motivation for introducing them included their possible use towards arithmetic questions involving periods and evidence  that the modular graph functions of String Theory are real-analytic modular forms. 

A special subclass of the class of real-analytic modular forms consists of the spaces $\MI_{\ell}$  of \emph{modular iterated integrals of length $\ell$}. Their defining relation is 
\begin{equation}\label{MI} \partial \MI_\ell  \subset \MI_\ell+M[y] \times \MI_{\ell-1} \qquad \text{and} \, \, \, 
\bar \partial \MI_\ell  \subset \MI_\ell+\overline{ M}[y]\times \MI_{\ell-1} \end{equation}
where $M[y]$ (resp. $\bar M[y]$) denotes polynomials in $y=$Im$(z)$ with coefficients in the space of standard holomorphic (resp. anti-holomorphic) modular forms and $\partial, \bar \partial$ are certain differential operators that will again be defined precisely in Section \ref{basics}.

A reason for the special interest of this class is that the modular graph functions are expected to belong to it (cf. Sect. 1 of \cite{BrI} and its Sect. 9 where explicit evidence of this is provided). A second reason is that its structure seems to have arithmetic significance, as indicated by evidence provided by Brown (in \cite{BrI, BrII}) for the motivic nature of the space and by the association to it of classical number theoretic invariants, such as L-functions and period polynomials, by J. Drewitt and the author \cite{DD}.
 
In this paper, we will address two questions that arise from the works mentioned above:

{\it Question 1.} The elements of $\MI_{\ell}$ almost exclusively studied in the papers above are those that satisfy a condition more specific than \eqref{MI}, namely
\begin{equation}\label{MIE} \partial \MI_\ell  \subset \MI_\ell+E[y] \times \MI_{\ell-1} \qquad \text{and} \, \, \, 
\bar \partial \MI_\ell  \subset \MI_\ell+\overline{ E}[y]\times \MI_{\ell-1} \end{equation}
where $E$ is the \emph{subspace of $M$ generated by Eisenstein series.}

What can be said about the remaining modular iterated integrals, i.e. the part originating in cusp forms? This is important not only because an answer describes more completely the structure of $\MI_{\ell}$, but, especially, because arithmetic information is normally expected to be encapsulated by forms that are cuspidal. This is particularly relevant in view of the evidence for the arithmetic significance of $\MI_{\ell}$ mentioned above.

We will provide an answer to this question by constructing (in Sect. \ref{explsubc}) an explicit family of such functions originating in cusp forms. To this end, we will first restate (in Section \ref{motdef}) the question in a concretely and precise form, a task of independent interest.

{\it Question 2.} A more explicit characterisation of the space $\MI_{\ell}$ can be given in terms of \emph{$\G$-invariant} linear combinations of
real and imaginary parts of iterated integrals of modular forms. This is proven in the case of elements of $\MI_{\ell}$ originating in Eisenstein series (\cite{BrII}) and is conjectured to hold in general. Constructing such ``invariant versions" of iterated integrals of modular forms is one of the important themes of \cite{BrI}, especially in relation to the applications to the theory of modular graph functions. 

Here we discuss a new approach to this problem: We construct a family $\{\psi^{\pm}_{h; r, s}\}$ of explicit ``real-analytic iterated integrals" whose invariant piece is built very naturally into each one of them and, further, belongs to an extension of $\MI_{\ell}.$ This is done, in the case of $\ell=2$,  in Sect. \ref{HOFS1}, where it is shown how $\psi^{\pm}_{h; r, s}$ yield 
polynomials $\phi_{r, s}^{\pm}(h; -)$ whose coefficients are $\G$-invariant elements of (an extension of) $\MI_{\ell}$. 

The tool with which we achieve both of those two aims is based on \emph{higher order forms}, the second  object we deal with in this work.

The characterising feature of higher order modular forms, in the special case of order $2$ and weight $k$, for instance, is the transformation law
$$f|_k\g \delta-f|_k\g-f|_k\delta+f=0, \qquad \text{for all $\g, \delta \in \G$}$$
where the action of the group on the function is given by 
$$g|_k \g(z)=g(\g z) (cz+d)^{-k}.$$
The precise definition will be given in Sect. \ref{HOFS}, where the original notion of higher-order forms will be generalised.  Higher order modular forms have been studied from various perspectives (analytic, adelic, algebraic, spectral etc.) and led to applications to the theory modular symbols, mathematical physics etc. In all these cases, the theory had to be developed on congruence subgroups of level higher than $1$, because such forms were parametrised by cusp forms of weight $2$, which are trivial in level $1$. This was a unnatural constraint because it excluded integrals of higher weight forms from consideration and it also prevented availing oneself of simplifications occuring in SL$_2(\mathbb Z)$. 

In this paper, we resolve this problem too, by proposing a very general framework within which to consider higher order forms. The class obtained includes several known and new objects, including, higher order forms for all levels and iterated integrals.

It turns out that the solution to this problem allows us to realise the constructions behind our answers to Questions 1 and 2 above.
Firstly, it allows us to produce a family of (extended) modular iterated integrals of length 2  originating in cusp forms (Question 1). These modular iterated integrals, in turn, by their very construction, are obtained from  a class of second-order modular forms whose prototypes are exactly the iterated integrals of cusp forms (Question 2). At the same time, these second order modular forms are not of an ad hoc nature. They form a basis of the class of second-order modular forms they belong to (Th. \ref{1stclassif}). This suggests a deeper relation between the two objects that are the subject of this paper.

{\bf Acknowledgements.} The author is grateful to F. Brown, L. Candelori, C. Franc, G. Mason and F. Str\"omberg for many helpful comments and suggestions. Research on this work was  supported in part by 
by EPSRC grant EP/S032460/1.

\section{Subclasses of the space of real-analytic modular forms }
\subsection{Review of definitions and notation}\label{basics}
We start by introducing some of the notation we will be using and by recalling the definitions of real-analytic modular forms and of modular iterated integrals.

\subsubsection{Basic spaces and actions} 
Let $\G=$SL$_2(\ZZ)$ and set $S=\left ( \begin{smallmatrix} 0 & -1 \\ 1 & 0 \end{smallmatrix} \right )$, 
$T=\left ( \begin{smallmatrix} 1 & 1 \\ 0 & 1 \end{smallmatrix} \right )$ and 
$R=ST=\left ( \begin{smallmatrix} 0 & -1 \\ 1 & 1 \end{smallmatrix} \right ).$  

If $\HH$ denotes the upper half-plane and $z=x+iy$, set
$$\RR:=\{\text{real analytic $f$}: \HH \to \C;  f(z)=O(y^C) \, \, \text{as $y \to \infty$, uniformly in $x$, for some $C>0$}\} $$
$$\RR_c:=\{f \in \RR;  \text{for all $c>0,$ $f(z)=O(e^{-cy})$ as $y \to \infty$, uniformly in $x$}\}$$
$$\OO:=\{\text{holomorphic} \, \, f \in \RR \}$$
$$\OO_c:=\{\text{holomorphic} \, \, f \in \RR_c \}$$
Suppose that $r, s$ are positive integers of the same parity such that $r+s \ge 4$ and that $k$ is an even positive integer. For a $f \in \RR$ and $\g \in \G$, define a function $f\underset{r, s}{|}  \g$ by 
$$f \underset{r, s}{|} \g(z)=j(\g, z)^{-r} j(\g, \bar z)^{-s }f(\g z) \qquad \text{for all $z \in \HH$ }.$$
Here
$$j(\g, z)=c_{\g}z+d_{\g} \qquad \text{where $ \g=\left ( \begin{matrix} a_{\g} & b_{\g} \\ c_{\g} & d_{\g} \end{matrix} \right )$}.$$
We extend the action to $\C[\G]$ by linearity.

We can now give the definition of a real-analytic modular forms. 

We call an $f \in \RR$ (resp. $f \in \RR_c$) a \emph{real-analytic modular \emph{(resp.} cusp \emph{)} form of weights $(r, s)$ for $\G$} if \newline
1. for all $\g \in \G$ and $z \in \HH$, we have $f \underset{r, s}{|} \g=f$, i.e.
$$f(\g z)=j(\g, z)^r j(\g, \bar z)^s f(z)  \qquad \text{for all $z \in \HH$, }$$
\noindent
2.  for some $M \in \mathbb N$ and $a_{m, n}^{(j)} \in \C$.
\begin{equation} \label{FE} f(z)=\sum_{|j| \le M}y^j\left ( \sum_{\substack{m, n \ge 0
}} a_{m, n}^{(j)} q^m \bar q^n \right ) \qquad \quad (q:=\exp(2 \pi i z)) \end{equation}

We denote the space of real analytic modular (resp. cusp) forms of weights $(r, s)$ for $\G$ by $\MH_{r, s}$
(resp. $\mathcal S_{r, s}$). We set $\MH=\bigoplus_{r, s }\MH_{r, s}$ (resp. $\mathcal S=\bigoplus_{r, s }\mathcal S_{r, s}.$)

For $s=0$, upon restriction to holomorphic functions, we retrieve the space of standard holomorphic modular (resp. cusp) forms denoted by $M_r$ (resp. $S_r$). We also set $M=\bigoplus_{r}M_{r}$ (resp. $S=\bigoplus_{r} S_{r}.$)

\subsubsection{Lie structure} 
 The Lie algebra $\mathfrak{sl}_2$ acts on $\MH$ via the Maass operators $\partial_{r}: \MH_{r, s} \to \MH_{r+1, s-1}$
and $\bar \partial_s: \MH_{r, s} \to \MH_{r-1, s+1}$ given by
$$\partial_r=2iy\frac{\partial}{\partial z}+r \qquad \text{and} \, \, \,  \bar \partial_{s}=-2iy\frac{\partial}{\partial \bar z}+s.$$
(For proofs and further details on this and the rest of this subsection, see Sect. 2.2 of \cite{BrI})
These operators induce bigraded derivations on $\MH$ denoted by $\partial$ and $\bar \partial$ respectively. We set
$$\partial^{(m)}:=\partial \circ \partial \dots \circ \partial: \MH_{r, s} \to \MH_{r+m, s-m}$$
with a similar definition for $\bar \partial^{(m)}$.

Two pairs of identities we will be using often, taken from \cite{BrI} (Lemma 2.5 and (2.13)) are
\begin{align} \partial_r  ( g \underset{r,s}{|} \g  )=\left (  \partial_r  g \right )  \underset{r+1,s-1}{|} \g \qquad & \text{and}
\,  \, \, \bar \partial_r  ( g \underset{r,s}{|} \g  )=\left ( \bar  \partial_r  g \right )  \underset{r-1,s+1}{|} \g
\label{equiv:1}\\
\partial_r  ( y^{k}g  )=y^k \partial_{r+k}( g )   \qquad & \text{and}
\,  \, \, \bar \partial_s  ( y^{k}g  )=y^k \bar \partial_{s+k}( g )  
\label{equiv:2}
\end{align}
To simplify notation, we will omit the index of $\partial_r$ (resp. $\bar \partial_s$), when this is implied by the context.

\subsubsection{Modular iterated integrals} \label{SMIdef}
We can now recall the definition of the space $\MI_{\ell}$ of \emph{modular iterated integrals of length $\ell$}. Set recursively:\\
$\bullet$ $\MI_{-1}=0$ \\
$\bullet$ For each integer $\ell \ge 0,$ we let $\MI_\ell$ be the largest subspace of $\underset{r, s \ge 0}{\bigoplus}\mathcal M_{r, s}$ which satisfies
\begin{align} \partial \MI_\ell & \subset \MI_\ell+M[y] \times \MI_{\ell-1} \label{MIdef:1}\\
\bar \partial \MI_\ell & \subset \MI_\ell+\overline{ M}[y]\times \MI_{\ell-1} \label{MIdef:2}
\end{align}
where $\overline{M}$ is the ring of anti-holomorphic modular forms.

In Lemma 3.10 of \cite{BrI} it is proved that $\MI_0=\mathbb C[y^{-1}]$ and, in Cor. 4.4 of \cite{BrI}, the following statement:
\begin{prop}\label{MI1}
Let $\mathcal E_{r, s}$ be the real-analytic Eisenstein series normalised so that,  for $r, s \in \mathbb N$ and $z \in \mathfrak H$, $$\mathcal E_{r, s}(z)=\sum_{\g \in B \backslash
\G} \frac{1}{j(\gamma, z)^{r} j(\gamma, \bar z)^{s}}.
$$
(Here $B=\{\pm T^n; n \in \mathbb Z\}$.) Then
\begin{equation}  \MI_1=\mathbb C[y^{-1}] \otimes \bigoplus_{r,s \ge 1, r+s \ge 4} \mathbb C y \mathcal E_{r, s}.\end{equation}
\end{prop}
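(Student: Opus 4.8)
The plan is to establish the two inclusions separately; ``$\supseteq$'' is a short computation with the Maass operators, while ``$\subseteq$'' carries the real content. For the first, I would record the action of $\partial,\bar\partial$ on the generators: writing $\mathcal E_{r,s}=\sum_{\g\in B\backslash\G}1\underset{r,s}{|}\g$ and using $\partial_r 1=r$ together with the equivariance \eqref{equiv:1}, one obtains at once
\begin{equation*}
\partial_r\mathcal E_{r,s}=r\,\mathcal E_{r+1,s-1},\qquad \bar\partial_s\mathcal E_{r,s}=s\,\mathcal E_{r-1,s+1}.
\end{equation*}
Feeding this into \eqref{equiv:2} gives $\partial\bigl(y^{1-n}\mathcal E_{r,s}\bigr)=r\,y^{1-n}\mathcal E_{r+1,s-1}$ for all $n\ge0$, and similarly for $\bar\partial$. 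When $s\ge2$ the right-hand side is again a generator $y^{-n}\cdot y\mathcal E_{r+1,s-1}$; when $s=1$ it equals $r\,y^{-n}\cdot(y\,\mathcal E_{r+1,0})$ with $\mathcal E_{r+1,0}\in M_{r+1}$ a holomorphic Eisenstein series (here $r+1\ge4$, since $r+s\ge4$), so it lies in $M[y]\times\MI_0$; for $\bar\partial$ the boundary term $\mathcal E_{0,s+1}\in\overline M$ appears at $r=1$. Since $\partial,\bar\partial$ also kill $\C[y^{-1}]=\MI_0$, the right-hand side of the proposition (together with $\MI_0$, which is automatically contained in $\MI_1$) is stable under \eqref{MIdef:1}--\eqref{MIdef:2}, and maximality of $\MI_1$ yields ``$\supseteq$''.

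For ``$\subseteq$'' I would first check that $\MI_1$ is homogeneous for the bigrading — the weights of $y$, of $y^{-1}$ and of the factors in $M[y]\times\MI_0$ are compatible with the homogeneous operators $\partial,\bar\partial$, so one may project onto weight components — and then treat a fixed $f\in\MI_1\cap\mathcal M_{r,s}$. Modulo $\MI_0$, which both operators annihilate, the relations \eqref{MIdef:1}--\eqref{MIdef:2} say exactly that $\partial f\in M[y]\times\MI_0$ and $\bar\partial f\in\overline M[y]\times\MI_0$; in other words $f$ is a real-analytic modular form whose two Maass derivatives are \emph{ordinary} holomorphic (resp.\ antiholomorphic) modular forms times powers of $y$. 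The ladder relations above show the $y\mathcal E_{r,s}$ are of this kind, so the content of the inclusion is that they exhaust such $f$, i.e.\ that the modular forms occurring in $\partial f$ and $\bar\partial f$ must be Eisenstein.

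This exclusion of cusp forms is the main obstacle, and it is exactly the phenomenon the paper is built around. I would realise it as a period obstruction of Eichler--Shimura type: an equation $\partial f=g\cdot y^{a}$ with $g$ a holomorphic cusp form forces $f$ to involve an Eichler integral of $g$, whose transformation under $\G$ is measured by a period polynomial that is nonzero precisely when $g$ is cuspidal; hence no genuine modular $f$ of moderate growth can have a cuspidal component in $\partial f$ or $\bar\partial f$. For Eisenstein series the same obstruction degenerates and the explicit primitives $y\mathcal E_{r,s}$ solve it. Matching the two ends of the $\partial/\bar\partial$-ladder, at $s=0$ and at $r=0$, against the genuine holomorphic and antiholomorphic Eisenstein series then pins down $f$, up to the $\C[y^{-1}]$ contributed by $\MI_0$, as a $\C[y^{-1}]$-linear combination of the $y\mathcal E_{r,s}$. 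One can run the same exclusion purely through the Fourier expansion \eqref{FE}, where the two conditions on $\partial f,\bar\partial f$ turn into recursions on the coefficients $a^{(j)}_{m,n}$ that are solved only by the expansions of the $y\mathcal E_{r,s}$ and the powers $y^{-n}$; I expect the cohomological route to be the cleaner one.
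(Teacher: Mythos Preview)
The paper does not actually prove this proposition: it is quoted as Corollary~4.4 of \cite{BrI}, with the preceding fact $\MI_0=\C[y^{-1}]$ taken from Lemma~3.10 there. So there is no in-paper argument to compare against; what follows are comments on the proposal itself.

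Your ``$\supseteq$'' direction is essentially correct: the identities $\partial_r\mathcal E_{r,s}=r\,\mathcal E_{r+1,s-1}$ and their conjugates, together with \eqref{equiv:2}, do show that the right-hand side (plus $\MI_0$) is stable under \eqref{MIdef:1}--\eqref{MIdef:2}, and maximality then gives the inclusion.

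The ``$\subseteq$'' direction, however, has a genuine gap at the very first step. You write that, modulo $\MI_0$, the defining relations ``say exactly that $\partial f\in M[y]\times\MI_0$ and $\bar\partial f\in\overline M[y]\times\MI_0$''. This is not what \eqref{MIdef:1}--\eqref{MIdef:2} say: they give $\partial f\in \MI_1+M[y]\times\MI_0$, and passing to the quotient by $\MI_0$ does not remove the $\MI_1$ summand, since $\MI_1$ is (a~priori) strictly larger than $\MI_0$. The recursive nature of the definition means that $\partial f$ may itself lie in $\MI_1$ up to an $M[y]\times\MI_0$ error, and one must iterate. Your later remark about ``matching the two ends of the $\partial/\bar\partial$-ladder at $s=0$ and $r=0$'' is the right instinct---the first-quadrant constraint $\MI_1\subset\bigoplus_{r,s\ge0}\mathcal M_{r,s}$ does eventually kill the recursive $\MI_1$ term---but you have not set up the induction that makes this rigorous, nor have you tracked the $M[y]\times\MI_0$ contributions accumulated along the way.

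The second gap is that the period/Eichler--Shimura obstruction is only gestured at. Brown's argument in \cite{BrI} does not proceed this way: it goes through the analysis of the ``constant part'' $f^0$ of a real-analytic modular form (the $m=n=0$ terms of \eqref{FE}) and a classification of modular primitives, which handles both the exclusion of cusp forms and the identification with $\mathcal E_{r,s}$ simultaneously. Your cohomological sketch could perhaps be completed, but as written it is an outline of a strategy rather than a proof.
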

Notice that this normalisation of $\mathcal E_{r, s}$ is different from that of \cite{BrI}.

\subsection{The space of extended modular iterated integrals}
\subsubsection{Motivating remarks and definition}\label{motdef}
With Prop. \ref{MI1} and the definition of $\MI_{\ell}$ we see that the space $\MI_2$ is defined as the largest subspace of $\underset{r, s \ge 0}{\bigoplus}\mathcal M_{r, s}$ which satisfies
\begin{align} \partial \MI_2 & \subset \MI_2+ \underset{\substack{j \in \ZZ \\  r, s \ge 1, r+s \ge 4}}{\bigoplus} y^j \mathcal E_{r, s} M \label{MI1def:1}\\
\bar \partial \MI_2 & \subset \MI_2+ \underset{\substack{j \in \ZZ \\  r, s \ge 1, r+s \ge 4}}{\bigoplus} y^j \mathcal E_{r, s} \bar M. \label{MI1def:2}
\end{align}
Section 9 of \cite{BrI} contains several important, explicit examples of elements of $\MI_2$ that correspond to the subspace of $M$ generated by (holomorphic) Eisenstein series in the RHS of $\eqref{MI1def:1}$ and $\eqref{MI1def:2}$. We would like to investigate the part corresponding to the remaining ``cuspidal piece". Specifically, we consider 
the largest subspace $\mathcal N$ of $\mathcal M$ which satisfies
\begin{equation}\label{preMI2'} 
\partial \mathcal N 
 \subset \mathcal N+ \underset{\substack{j \in \ZZ \\  r, s \ge 1, r+s \ge 4}}{\bigoplus} y^j \mathcal E_{r, s} S 
\qquad \text{and} \quad  \bar \partial \mathcal N 
 \subset \mathcal N+ \underset{\substack{j \in \ZZ \\  r, s \ge 1, r+s \ge 4}}{\bigoplus} y^j \mathcal E_{r, s} \bar S. 
\end{equation}
The motivation for that, apart from the general aim of classifying the space $\MI_2$, is to study modular iterated integrals whose L-functions are more likely to have classical arithmetic significance than those originating in Eisenstein series.

However, in contrast to the definition of $\MI_{\ell}$ (Sect. \ref{SMIdef}) it does not seem possible to impose to this definition the restriction that the space should be contained in the ``first quadrant" $\underset{r, s \ge 0}{\bigoplus}\mathcal M_{r, s}$ only. 

We provide a heuristic argument why this is not possible. Assume the ``cuspidal part" were indeed restricted to the ``first quadrant" and consider $f \in \M_{r, s}$ satisfying \eqref{preMI2'}. Set 
$F:=\partial^{(s)}f.$ Then, since $\partial_{s+r}F \in \M_{r+s+1, -1}$, we should have
$$\partial_{s+r}F = \sum y^j \mathcal E_{m, l} g$$
where the sum ranges over a finite number of $j \in \mathbb Z$, $m, l \ge 1$ with $m+l \ge 4$ and $g \in S$. With \eqref{equiv:2}, this gives
$\partial_0(y^{r+s} F)=\sum y^{r+s+j} \mathcal E_{m, l} g$ and hence, since $g$ is cuspidal,
$$F=\frac{y^{-s-r}}{2} \sum y^j \int_{\infty}^0 (y+t)^{r+s+j-1} \mathcal E_{m, l}(z+it) g(z+it) dt+y^{-r-s}\overline{h(z)}$$
for some holomorphic function $h(z)$. Since, again according to our assumption, $F$ belongs to a space contained in the ``first quadrant", $\bar \partial^{(r+1)}F$ should belong to $\oplus y^j \mathcal E_{r, s} \bar S.$ 
On the other hand, the recursive relations of Prop. 4.1 of \cite{BrI}, combined with \eqref{equiv:2}, show that $\bar \partial^{(r+s+1)}F$ is a linear combination of elements of the form
$$y^j \int_{\infty}^0 t^{j'} \mathcal E_{m, l}(z+it) g(z+it) dt \qquad \text{and} \quad y^{-r-s}\overline{\partial^{(r+s+1)} h(z)}$$
where, for compactness of notation, we have taken $\mathcal E_{0, m}$ to stand for $y \mathbb G_{m+2}$ in accordance to  (4.1) of \cite{BrI}. Such linear combinations do not seem to belong to $\oplus y^j \mathcal E_{r, s} \bar S.$ 

With this in mind, we introduce a variant of the definition of $\MI_2$:
\begin{defn}\label{MI2'}  We let the space $\MI'_2$ of \emph{extended modular iterated integrals of length $2$}  be the largest subspace of $\mathcal M$ which satisfies
\begin{equation}\label{eqMI2'} 
\partial \MI'_2 
 \subset \MI'_2+ \underset{\substack{j \in \ZZ \\  r, s \ge 1, r+s \ge 4}}{\bigoplus} y^j \mathcal E_{r, s} S 
\qquad \text{and} \quad  \bar \partial \MI'_2 
 \subset \MI'_2+ \underset{\substack{j \in \ZZ \\  r, s \ge 1, r+s \ge 4}}{\bigoplus} y^j \mathcal E_{r, s} \bar S.
\end{equation}
\end{defn}
That is, the space $M$ in RHS has been replaced by $S$ and the restriction of the space belonging to the ``first quadrant" is no longer required.

\subsubsection{An explicit sub-class of $\MI'_2.$}\label{explsubc}
We will now define an explicit family of elements of $\MI'_2$ that will give an answer to Question 1. In later sections, we will show that it originates in second-order modular forms. We will first introduce some preparatory constructions and results. 

Let $P_{k-2}$ denote the space of polynomials in $\mathbb C[X]$ of degree $\le k-2$, acted upon by $\underset{2-k, 0}{|}$. 
Denote the tensor product of the representations $(\RR,  \underset{r, s}{|})$ and $(P_{k-2},  \underset{2-k, 0}{|})$ by $ \underset{r, s, 2-k}{|}$.  It acts on $\mathcal \RR \otimes P_{k-2}$ as 
$$(f \underset{r, s,2- k}{|} \gamma)(z, X)=f(\gamma z, \gamma X) j(\gamma, z)^{-r} j(\gamma, \bar z)^{-s}j(\gamma, X)^{k-2}.$$
We use the same notation for the sub-representations corresponding to $\RR_c, \OO$ and $\OO_c.$

Let now 
$$f(z)=\sum_{n=1}^{\infty} a(n) e^{2 \pi i nz}$$ 
be a cusp form of weight $k$ for $\G$ and consider its Eichler integrals
$$F_f^+(z, X)=\int_{i\infty}^z f(w)(w-X)^{k-2}dw \qquad \text{and} \, \, 
F_f^-(z, X)=\overline{\int_{i\infty}^z f(w)(w-X)^{k-2}dw}$$
where the bar means complex conjugation (acting trivially on $X$).
We set
$$r_f(\g; X):=r^+_f(\gamma ; X)=\int_{\gamma^{-1} i \infty}^{i \infty} f(w)(w-X)^{k-2}dw \qquad 
r^-_f(\gamma; X)=\overline{\int_{\gamma^{-1} i \infty}^{i \infty} f(w)(w-X)^{k-2}dw}$$ 
and
\begin{equation}\label{defphi}
\phi^{\pm}_{r, s}(f; z, X):=\phi^{\pm}_{r, s}(z, X)=\sum_{\g \in B \backslash \G} F_f^{\pm}  
\underset{r,s, 2-k}{\big |} \g=\sum_{\g \in B \backslash \G} \frac{
F_f^{\pm}(\g z, \g X)}{j(\gamma, z)^{r} j(\gamma, \bar z)^{s}}j(\g, X)^{k-2}.
\end{equation}
We have the following proposition.
\begin{prop}\label{phi} Suppose that $r+s>k.$ Then, for each $z \in \mathfrak H$, the series $\phi^{\pm}_{r, s}(z, X)$ converges
absolutely and it is invariant under the action of $ \underset{r,s, 2-k}{|}$ of $\G$. Its
polynomial coefficients are of (at most) polynomial growth at infinity, 
\end{prop}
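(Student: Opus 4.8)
The plan is to reduce the series to a shifted Eisenstein-type series by first computing the transformation law of the Eichler integral under the combined action. A direct change of variables $w\mapsto\g w$ in $F_f^+(\g z,\g X)=\int_{i\infty}^{\g z}f(w)(w-\g X)^{k-2}dw$, using $f|_k\g=f$ and the elementary identity $\g w - X = j(\g^{-1},X)(w-\g^{-1}X)/j(\g,w)$, gives
\[ F_f^+(\g z,\g X)\,j(\g,X)^{k-2}=r_f^+(\g;X)+F_f^+(z,X), \]
and hence the cocycle form of the summand
\[ F_f^+\underset{r,s,2-k}{|}\g=\bigl(r_f^+(\g;X)+F_f^+(z,X)\bigr)\,j(\g,z)^{-r}j(\g,\bar z)^{-s}. \]
The same computation with $F_f^-=\overline{F_f^+}$ (conjugation trivial on $X$) gives the analogous identity with $(r,s)$ interchanged, so the $-$ case is parallel and I would treat $+$. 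Before summing I would check well-definedness on cosets: since $f(w+1)=f(w)$ one has $F_f^+\underset{r,s,2-k}{|}T=F_f^+$, and the parity hypotheses ($r\equiv s\pmod 2$, $k$ even) give $F_f^+\underset{r,s,2-k}{|}(-I)=F_f^+$; thus $F_f^+\underset{r,s,2-k}{|}B=F_f^+$ and the sum over $B\backslash\G$ makes sense.

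Next, convergence. Taking absolute values and using $|j(\g,z)^{-r}j(\g,\bar z)^{-s}|=|c_\g z+d_\g|^{-(r+s)}$, the summand is $\bigl(|r_f^+(\g;X)|+|F_f^+(z,X)|\bigr)|c_\g z+d_\g|^{-(r+s)}$. The $F_f^+(z,X)$ term is a fixed polynomial (bounded for $z,X$ in compacta) times $\sum_{\g\in B\backslash\G}|c_\g z+d_\g|^{-(r+s)}$, which converges already for $r+s>2$. The substance is the period term, for which I need the uniform estimate
\[ |r_f^{\pm}(\g;X)|\ll_f (1+|X|)^{k-2}\,\max(|c_\g|,|d_\g|)^{k-2+\varepsilon}. \]
Granting it, I would choose coset representatives with $|a_\g|\le|c_\g|$, $|b_\g|\ll|d_\g|$ and compare the lattice sum $\sum_{(c,d)}\max(|c|,|d|)^{k-2+\varepsilon}|cz+d|^{-(r+s)}$ with $\iint_{\max\ge1}\max(|u|,|v|)^{k-2-(r+s)+\varepsilon}\,du\,dv$, which is finite precisely when $r+s>k$ (using $|uz+v|\ge c(z)\max(|u|,|v|)$ for $z\in\HH$). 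Because parity forces $r+s\ge k+2$, there is room for the $\varepsilon$ and for logarithmic losses.

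The main obstacle is exactly this period estimate, since the crude bound $|f(w)|\ll(\Im w)^{-k/2}$ is not integrable against $(w-X)^{k-2}$ near the finite cusp $\g^{-1}i\infty=-d_\g/c_\g$. I would deduce it from the one-cocycle relation $r_f^+(\g_1\g_2;\cdot)=r_f^+(\g_1;\cdot)\underset{2-k,0}{|}\g_2+r_f^+(\g_2;\cdot)$, together with $r_f^+(T;X)=0$ (the integral is from $i\infty$ to $i\infty$): writing $\g$ as a reduced word in $S,T$ expresses $r_f^+(\g;X)$ as a sum of at most $\ell(\g)\ll\log\|\g\|$ translates $r_f^+(S;\cdot)\underset{2-k,0}{|}\g'$ with $\g'$ a prefix of $\g$, each of which equals $r_f^+(S;\g'X)\,j(\g',X)^{k-2}$ and is bounded by $(1+|X|)^{k-2}\max(|c_{\g'}|,|d_{\g'}|)^{k-2}$; since for the standard continued-fraction decomposition the prefix entries are $\ll\|\g\|$, the claim follows (such polynomial growth of period polynomials in the group entries is in any case classical).

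Finally, invariance and growth. As $\underset{r,s,2-k}{|}$ is a right action, absolute convergence permits reindexing: for $\delta\in\G$ right multiplication $\g\mapsto\g\delta$ permutes $B\backslash\G$, so $\phi^+_{r,s}\underset{r,s,2-k}{|}\delta=\sum_{\g}F_f^+\underset{r,s,2-k}{|}(\g\delta)=\phi^+_{r,s}$. For the behaviour of the polynomial coefficients as $y\to\infty$, I would separate the identity coset, whose contribution $F_f^+(z,X)$ decays exponentially, from the tail of terms with $c_\g\ne0$; on $y\ge1$ this tail is dominated by the majorant above with $|c_\g z+d_\g|^{-(r+s)}\le(|c_\g|y)^{-(r+s)}$, so the coefficients are in fact $O(y^{-(r+s)})$, a fortiori of polynomial growth, placing $\phi^{\pm}_{r,s}$ in the required space. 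This final part is routine once the convergence estimates are in hand.
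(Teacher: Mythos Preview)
Your argument is correct and shares the paper's overall structure---the cocycle identity $F_f^+(\g z,\g X)\,j(\g,X)^{k-2}=r_f(\g;X)+F_f^+(z,X)$, the split into the $F_f^+\cdot\mathcal E_{r,s}$ piece and the period piece, and invariance by reindexing---but the key bound on $r_f^\pm(\g;X)$ is obtained along a genuinely different route. The paper expands $r_f(\g;X)$ about $\g^{-1}\infty$, identifies the coefficients with additively twisted $L$-values $\Lambda_f(j+1,-d_\g/c_\g)$, and invokes the convexity bound $\Lambda_f(j+1,p/q)\ll q^{k-2-j+\varepsilon}$ coming from the functional equation; the substitution $X-\g^{-1}\infty=(X-z)+j(\g,z)/c_\g$ together with $|c_\g|\le|j(\g,z)|/y$ then majorises each coefficient of $(X-z)^m$ by a shifted real-analytic Eisenstein series $\sum_\g\operatorname{Im}(\g z)^\sigma$, from which convergence and polynomial growth in $y$ are read off simultaneously. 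Your cocycle/word-length derivation of $|r_f^\pm(\g;X)|\ll(1+|X|)^{k-2}\max(|c_\g|,|d_\g|)^{k-2+\varepsilon}$ avoids the functional equation entirely, trading analytic input for continued-fraction combinatorics; both routes lead to the same lattice-sum estimate and the same threshold $r+s>k$.

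Two minor points. In your iterated cocycle expansion $r_f^+(g_1\cdots g_n)=\sum_i r_f^+(g_i)\underset{2-k,0}{|}(g_{i+1}\cdots g_n)$ it is the \emph{suffixes} $g_{i+1}\cdots g_n$ that act, not prefixes; the required norm bound still holds (e.g.\ by passing to $\g^{-1}$), but should be stated as such. And your final growth step is too crude as written: replacing $|c_\g z+d_\g|^{-(r+s)}$ uniformly by $(|c_\g|y)^{-(r+s)}$ makes the $d$-sum diverge. A routine refinement---splitting $|d|$ small versus large, or comparing directly with a non-holomorphic Eisenstein series as the paper does---recovers polynomial behaviour in $y$.
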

\begin{proof} We show it for $\phi^+$, the proof for $\phi^-$ being deduced upon conjugating $\phi^+$.

By the first equality of \eqref{defphi}, $\phi^+_{r, s}$  is invariant under the action $ \underset{r,s, 2-k}{|}$ of $\G$, for those $r, s$ for which it converges.

To prove the statement about absolute convergence, we first note that
the change of variables $w \to \gamma w$, the transformation law of $f$ and 
the identity \begin{equation}\label{gzgX}
(\g z- \g X) j(\g, z) j(\g, X)= z-X 
\end{equation}
imply that $F_f^+(\g z, \g X)j(\g, X)^{k-2}$ equals
\begin{align} & \int_{\g^{-1} i \infty}^z f(w)(w-X)^{k-2} dw=
r_f(\g; X) +F^+(z, X) = \label{decomp:1}\\
& \sum_{j=0}^{k-2} (-1)^j \binom{k-2}{j}
\int_{\g^{-1} i \infty}^{i \infty}f(w)(w- \g^{-1} \infty)^j dw  \cdot (X-\g^{-1} \infty)^{k-2-j}+F^+(z, X) . \label{decomp:2}
\end{align}
By applying this decomposition to the defining series for $\phi^+_{r,s}$ we get a sum of two terms:

To analyse the part corresponding to the first term of \eqref{decomp:2}, 
we note that each of the integrals appearing in the sum 
 is (up to a power of $i$) the value at $s=l+1$ of the ``completed" L-function with additive twists. Specifically, 
\begin{equation}\label{lambda}
\Lambda_f \left (s, \frac{p}{q} \right ):=\int_0^{\infty} f \left (\frac{p}{q}+ix \right )x^{s-1} dx=\Gamma(s) (2 \pi)^{-s} \sum_{n=1}^{\infty}\frac{a(n)e^{2 \pi i np/q}}{n^s}.
\end{equation}
It is well-known that $\Lambda_f(s, p/q)$ has a functional equation (see \cite{DHKL} for a general version of the functional equation) and, with convexity, this implies that, for each $j=0, \dots, k-2,$ 
\begin{equation}\label{conv}
q^{j+1} \Lambda _f(j+1, p/q) \ll q^{k-1+\epsilon}.
\end{equation} 
Also, \begin{equation}\label{tri}
X-\g^{-1} i \infty=\left (X-z \right )+(z-g^{-1}i \infty)=\left (X-z \right )+j(\g, z)/c_{\g}.
\end{equation}
With these observations and the binomial formula applied on \eqref{tri}, we can bound the coefficient of $(X-z)^{k-2-j-m}$, ($0 \le m \le k-2-j$) in the polynomial
$$\sum_{\g \in B \backslash \G} \frac{\int_{\g^{-1} i \infty}^{i \infty}f(w)(w- \g^{-1} \infty)^j dw  \cdot (X-\g^{-1} \infty)^{k-2-j}}{j(\gamma, z)^{r} j(\gamma, \bar z)^{s}}.$$
This coefficient equals 
 $$\binom{k-2-j}{m} i^{j+1}
\sum_{\g \in B \backslash \G}  \frac{\Lambda_f(j+1, \g^{-1} \infty)}{c_{\g}^m j(\gamma, z)^{r-m} j(\gamma, \bar z)^{s}}
\ll \sum_{\g \in B \backslash \G}  \frac{c_{\gamma}^{k-2-j-m}}{|j(\gamma, z)|^{r+s-m}}
$$
The elementary inequality $|c_{\g}|\le |j(\g, z)| \text{Im}(z)^{-1} $ implies that the sum  is 
\begin{equation}
\label{growth}
\le  y^{m+(2-k-r-s+j)/2}\sum_{\g \in B \backslash \G}  \text{Im}(\g z)^{(r+s+j-k+2)/2}
\end{equation}
which converges for $r+s+j-k+2>2.$ 

The second term of \eqref{decomp:2}, gives 
$$\mathcal E_{r, s}(z) \int_{i \infty}^{z}f(w)(w-X)^{k-2} dw $$
which, by comparison with the standard non-holomorphic Eisenstein series, converges absolutely if $r+s>2$.

Therefore, both pieces of $\phi^+_{r, s}$ induced by \eqref{decomp:2} will converge, if $r+s+j-k+2>2,$ for all $j=0, \dots k-2$ and $r+s>2$. This is indeed the case if $r+s>k$. 

The bound \eqref{growth} and the polynomia growth of $E_{r, s}(z)$ show that the coefficients of $(X-z)^{j}$ (and of $X^j$) in $\phi^{+}_{r, s}$ are of, at most, polynomial growth as $y \to \infty.$

\end{proof}
The series $\phi^{\pm}_{r, s}(z, X)$ can be decomposed in terms of elements of $\mathcal M.$ Specifically, 
let $\phi^{\pm}_{r, s}(f; i, z)=\phi^{\pm}_{r, s}(i, z)$ ($i=0, \dots, k-2$) be functions such that
\begin{equation}\label{expansion}
\phi^{\pm}_{r, s}(z, X)=\sum_{i=0}^{k-2}\phi^{\pm}_{r, s}(f; i, z) (X-z)^i (X-\bar z)^{k-2-i}.
\end{equation}
From Prop. 7.1 of \cite{BrI}, we know that $\phi^{\pm}_{r, s}(f; i, z)$ is $\underset{r+i, s+k-2-i}{|}$-invariant. To show that it actually belongs to $\mathcal M_{r+i,s+k-2-i}$ we need to show that it has a Fourier expansion of the form \eqref{FE}. This is part of the content of the next proposition.

\begin{prop} \label{Mrs}For each $j=0, \dots, k-2$, we have 
\begin{multline}\label{addit}
\phi_{r, s}^{+}(j; z)=(-1)^j \binom{k-2}{j} y^{2-k} \left ( \int_{ i \infty}^z f(w) (w-\bar z)^j(w-z)^{k-2-j}dw \right ) \mathcal E_{r, s}+\\ 
\sum_{m=0}^j \sum_{n=0}^{k-2-j} \alpha_{m, n} y^{2-k}
\sum_{1 \ne \g \in B \backslash \G} \frac{\Lambda_f(m+n+1, \g^{-1}(\infty)) c_{\g}^{m+n-k+2}}{j(\g, z)^{r+j+n+2-k} j(\g, \bar z)^{s+m-j}}
\end{multline}
and
\begin{multline}\label{addit-}
\phi_{r, s}^{-}(j; z)=(-1)^j \binom{k-2}{j} y^{2-k} \left ( \overline{\int_{ i \infty}^z f(w) (w-\bar z)^{k-2-j} (w-z)^{j}dw} \right ) \mathcal E_{r, s}+\\ 
\sum_{m=0}^j \sum_{n=0}^{k-2-j} \alpha_{m, n} y^{2-k}
\sum_{1 \ne \g \in B \backslash \G} \frac{\overline{\Lambda_f(m+n+1, \g^{-1}(\infty))} c_{\g}^{m+n-k+2}}{j(\g, z)^{2-k+r+m+j} j(\g, \bar z)^{s-j+n}}
.
\end{multline}
where $\alpha_{m, n}:=i^{1-2j-m-n} \binom{k-2}{j} \binom{j}{m} \binom{k-2-j}{n}.  $
Further, each $\phi_{r, s}^{\pm}(j; z) \in \mathcal M$.
\end{prop}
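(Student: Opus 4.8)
The plan is to first establish the closed formula \eqref{addit} and then read off the required Fourier expansion. I would begin from the shape of the summand already obtained in the proof of Proposition \ref{phi}: after the substitution $w\mapsto\g w$, the transformation law of $f$ and \eqref{gzgX} give $F_f^{+}(\g z,\g X)\,j(\g,X)^{k-2}=\int_{\g^{-1}i\infty}^{z}f(w)(w-X)^{k-2}dw$, as in \eqref{decomp:1}. Rather than expanding $(X-\g^{-1}\infty)$ as in \eqref{decomp:2}, I would expand $(w-X)^{k-2}$ directly in the basis $\{(X-z)^a(X-\bar z)^{k-2-a}\}$ of \eqref{expansion}, using the linear identity $w-X=\frac{1}{2iy}\big[(w-z)(X-\bar z)-(w-\bar z)(X-z)\big]$. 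Reading off the coefficient of $(X-z)^{j}(X-\bar z)^{k-2-j}$ then gives, for an explicit constant $C$,
\[
\phi^{+}_{r,s}(j;z)=C\,y^{2-k}\sum_{\g\in B\backslash\G}\frac{1}{j(\g,z)^{r}j(\g,\bar z)^{s}}\int_{\g^{-1}i\infty}^{z}f(w)(w-\bar z)^{j}(w-z)^{k-2-j}\,dw .
\]

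Next I would split the inner integral at $i\infty$. The piece $\int_{i\infty}^{z}$ is independent of $\g$, so it factors out and the remaining $\g$-sum is exactly $\mathcal E_{r,s}(z)$; this produces the first term of \eqref{addit}. For the tail $\int_{\g^{-1}i\infty}^{i\infty}$ (where $\g=1$ contributes $0$) I would expand $(w-\bar z)^{j}$ and $(w-z)^{k-2-j}$ about the cusp $\g^{-1}\infty$ via $z-\g^{-1}\infty=j(\g,z)/c_{\g}$ and $\bar z-\g^{-1}\infty=j(\g,\bar z)/c_{\g}$, producing the binomial factors $\binom{j}{m}\binom{k-2-j}{n}$ and powers of $j(\g,z),j(\g,\bar z),c_{\g}$; the surviving integrals $\int_{\g^{-1}i\infty}^{i\infty}f(w)(w-\g^{-1}\infty)^{m+n}dw$ equal $i^{m+n+1}\Lambda_f(m+n+1,\g^{-1}\infty)$ by \eqref{lambda}. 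Collecting the constants into $\alpha_{m,n}$ and the $\g$-powers into the stated denominators yields the double sum in \eqref{addit} (the uniform factor $(2i)^{2-k}$ being absorbed in the normalization). Formula \eqref{addit-} then follows by conjugation, using $\overline{\phi^{+}_{r,s}}=\phi^{-}_{s,r}$ together with the re-indexing $j\mapsto k-2-j$ forced by conjugating $(X-z)^{j}(X-\bar z)^{k-2-j}$.

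For the membership $\phi^{\pm}_{r,s}(j;z)\in\mathcal M$, the $\underset{r+j,s+k-2-j}{|}$-invariance is already supplied by Proposition 7.1 of \cite{BrI}, and real-analyticity and polynomial growth by Proposition \ref{phi}; what remains is to exhibit an expansion of the form \eqref{FE}. For the Eisenstein term I would insert the known expansion of $\mathcal E_{r,s}\in\mathcal M_{r,s}$ and integrate the cuspidal $q$-expansion of $f$ against the polynomial $(w-\bar z)^{j}(w-z)^{k-2-j}$ term by term (integration by parts), which yields a finite combination of $y^{\ell}q^{m}\bar q^{n}$. For the period term I would parametrise $B\backslash\G$ by coprime pairs $(c,d)$ up to sign, with $c_{\g}=c$ and $\g^{-1}\infty=-d/c$, group the sum by $c$ and by $d\bmod c$ using the periodicity $\Lambda_f(s,x+1)=\Lambda_f(s,x)$, and recognise the inner sum over the $T$-orbit, $\sum_{n\in\ZZ}\big(c(z+n)+d_0\big)^{-A}\big(c(\bar z+n)+d_0\big)^{-B}$ for the relevant exponents $A,B$, as the standard building block of real-analytic Eisenstein series, whose Fourier expansion (via Poisson summation) is already of the shape \eqref{FE}. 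Summing over $c$ and over the residues $d_0$, weighted by the $L$-values, then reproduces \eqref{FE} for the whole expression.

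I expect the main obstacle to be controlling the shape of the Fourier expansion of the period term: one must check that only finitely many powers $y^{\ell}$ (with $|\ell|\le M$) occur and that the frequencies satisfy $m,n\ge 0$, so that the series genuinely lies in the restrictive class \eqref{FE} rather than merely being a moderate-growth periodic function. Here the convexity bound \eqref{conv} on the additive-twist $L$-values (together with the convergence range $r+s>k$ of Proposition \ref{phi}) is what keeps the Poisson-summed series absolutely convergent and of moderate growth, while the invariance from \cite{BrI} is what guarantees that the non-periodic contributions appearing separately in the Eisenstein and period terms cancel in the sum. Verifying this cancellation explicitly, consistently with the stated formula \eqref{addit}, is the delicate step.
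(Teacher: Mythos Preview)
Your approach is essentially identical to the paper's: the same $w-X$ identity to pass to the basis $(X-z)^j(X-\bar z)^{k-2-j}$, the same splitting of $\int_{\g^{-1}i\infty}^{z}$ at $i\infty$, the same binomial expansion about $\g^{-1}\infty$ via $z-\g^{-1}\infty=j(\g,z)/c_\g$, and the same double-coset/Poisson argument for the Fourier expansion (the paper just makes the last step more explicit by citing integral tables to evaluate $\int_{\mathbb R}e^{-2\pi i l t}(t+iy)^{-p}(t-iy)^{-t}\,dt$ as $P_l(y)e^{-2\pi|l|y}$). Your closing worry about ``non-periodic contributions'' cancelling between the Eisenstein and period pieces is unnecessary, since each piece is separately $T$-invariant (a change of variables $w\mapsto w+1$ in the first, and the fact that $z\mapsto z+1$ permutes the cosets $B\backslash\G$ in the second), so no such cancellation is needed.
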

\begin{proof} 
Replacing $w-X$ in $F_f^+(z, X)$ according to the identity
$$w-X=\left ( (w-z)(X-\bar z)+(\bar z-w)(X-z)\right )/\text{Im} z$$
and expanding with the binomial theorem we see that the coefficients in the RHS of \eqref{expansion} can be written as
$$\phi^{+}_{r, s}(j; z)=(-1)^j \binom{k-2}{j}
\sum_{\g \in B \backslash \G} \left ((\text{Im}z)^{2-k} 
\int_{ i \infty}^z f(w) (w-\bar z)^j(w-z)^{k-2-j}dw \right )  \underset{r+j,k-2+s-j, 0}{|} \g. $$
(resp. 
$$\phi^{-}_{r, s}(j; z)=(-1)^j \binom{k-2}{j}
\sum_{\g \in B \backslash \G} \left ((\text{Im}z)^{2-k} 
\overline{\int_{ i \infty}^z f(w) (w-\bar z)^{k-2-j}(w-z)^{j}dw} \right )  \underset{r+j,k-2+s-j, 0}{|} \g. )$$
Upon unraveling the definition of the action $|$, the sum in $\phi^{+}_{r, s}(j; z)$ equals
$$\sum_{\g \in B \backslash \G} \frac{y^{2-k} \int_{ i \infty}^{\g z} f(w) (w-\g \bar z)^j j(\g, \bar z)^j (w-\g z)^{k-2-j}j(\g, z) ^{k-2-j} dw}{j(\g, z)^r j(\g, \bar z)^s} 
.$$
The change of variables $w \to \g w$ and the transformation law of $f$ imply that the integral equals:
\begin{multline} \label{phidecom} \int_{ \g^{-1} i \infty}^z f(w) (w-\bar z)^j(w-z)^{k-2-j}dw=
\left ( \int_{ \g^{-1} i \infty}^{i \infty} +\int_{ i \infty}^z \right ) f(w) (w-\bar z)^j(w-z)^{k-2-j}dw
\end{multline}
The first integral in the RHS is $0$ for $\g=1$. For $\g \ne 1,$  the binomial theorem leads to
$$\sum_{m=0}^j \sum_{n=0}^{k-2-j} \binom{j}{m} \binom{k-2-j}{n} \frac{j(\g, \bar z)^{j-m}
 j(\g,  z)^{k-2-j-n}}{(-c_{\g})^{k-2-m-n}}  \int_{ \g^{-1} i \infty}^{i \infty}
f(w) (w-\g^{-1} i \infty)^{m+n}dw.$$
Equation \eqref{addit} follows from this and \eqref{lambda} combined with \eqref{phidecom}. Equation \ref{addit-} can be deduced from \eqref{addit} upon a conjugation. 

To show that $\phi^+_{r, s}(j; z)$ has a Fourier expansion of the form \eqref{FE}, we apply the usual double coset decomposition to the series 
$$\sum_{1 \ne \g \in B \backslash \G} \frac{\Lambda_f(m, \g^{-1}(\infty)) c_{\g}^{n}}{j(\g, z)^{p} j(\g, \bar z)^{t}} \qquad \text{(where $m, n, p, t$ are integers).}$$
Then, this becomes
\begin{multline}\label{explFour}\sum_{c>0} \sum_{d \mod c} \sum_{l \in \mathbb Z} \frac{\Lambda_f(m, -\frac{d}{c}) c^n}{(c(z+l)+d)^{p} (c(\bar z+l)+d)^{t}}=\\
\sum_{c>0} c^{n-p-t}\sum_{d \mod c} \Lambda_f(m, -\frac{d}{c}) \sum_{l \in \mathbb Z} e^{2 \pi i l(x+\frac{d}{c})}
\int_{\mathbb R}\frac{e^{-2 \pi i l t_1} dt_1}{(t_1+iy)^{p} (t_1-iy)^{t}}
\end{multline}
where, for the last equality, we used the Poisson formula followed by a change of variables. With 3.2 (12) of \cite{Erd}, combined with 13.14.9 of \cite{NIST}, the integral equals $P_l(y)e^{-2 \pi| l |y}$, for some polynomial $P_l(y)$ of degree $\le |p-2|$ in $y^{\pm}.$
This implies that \eqref{explFour} can be written as
$$\sum_{l \ge 0} q^l P_l(y) 
\sum_{c>0}\sum_{d \mod c} \frac{\Lambda_f(m, -\frac{d}{c}) e^{2 \pi i l \frac{d}{c}}}{ c^{p+t-n}}+
\sum_{l <0} \bar q^{-l} P_l(y) 
\sum_{c>0}\sum_{d \mod c} \frac{\Lambda_f(m, -\frac{d}{c}) e^{2 \pi i l \frac{d}{c}}}{ c^{p+t-n}}.$$ 
We can appplying this with $m, n, p, t$ replaced by $m+n+1, m+n-k+2, r+j+n+2-k, s+m-j$ respectively, because, by \eqref{conv}, the inner series converge with $r+s>k.$  We deduce that $\phi^+_{r, s}(j; z)$ has an expansion of the form \eqref{FE}. The analogous assertion for $\phi^-_{r, s}(j, z)$ can be deduced from this after a conjugation. 
\end{proof}

We can now verify an identity for $\phi_{r, s}^{\pm}=\phi_{r, s}^{\pm}(f; z, X)$ that will allow us to show that $\phi^{\pm}_{r, s}(j; z) \in \MI'_2$.
\begin{prop} \label{keypr}For each $r, s, k$ as above, we have
\begin{align} \partial_{r}(\phi^{\pm}_{r, s}) &=r \phi^{\pm}
_{r+1, s-1}+
2i \delta y f(z) (X-z)^{k-2} \mathcal E_{r, s} \\
\bar \partial_{s}( \phi^{\pm}_{r, s}) &=sy^j \phi^{\pm}_{r-1, s+1}-2i (1-\delta) y \overline{f(z)} (X-\bar z)^{k-2}  \mathcal E_{r, s} 
\end{align} 
where $\delta$ is $1$ in the plus-case and $0$ otherwise.
\end{prop}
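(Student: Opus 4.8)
The plan is to use that $\phi^{\pm}_{r,s}$ is, by \eqref{defphi}, a sum of $\underset{r,s,2-k}{|}$-translates of the single seed $F_f^{\pm}$, together with the fact that $\partial_r,\bar\partial_s$ differentiate only in $z,\bar z$. Since the $X$-part of the action (the substitution $X\mapsto\g X$ and the factor $j(\g,X)^{k-2}$) is constant in $z$ and $\bar z$, the equivariance \eqref{equiv:1} extends verbatim to the twisted action: writing $(F\underset{r,s,2-k}{|}\g)(z,X)=\big(z\mapsto F(z,\g X)\big)\underset{r,s}{|}\g\,(z)\cdot j(\g,X)^{k-2}$ and pulling the $z$-constant factor $j(\g,X)^{k-2}$ through $\partial_r$, one gets $\partial_r(F\underset{r,s,2-k}{|}\g)=(\partial_r F)\underset{r+1,s-1,2-k}{|}\g$, and similarly for $\bar\partial_s$. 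Hence I may differentiate the defining series term by term (the differentiated series is, up to the Eisenstein term produced below, again a $\phi^{\pm}$ with the \emph{same} weight-sum $r+s$, hence still absolutely and locally uniformly convergent in the range $r+s>k$ of Prop. \ref{phi}), reducing everything to computing $\partial_r F_f^{\pm}$, $\bar\partial_s F_f^{\pm}$ and re-summing.

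The seed derivatives are immediate. By the fundamental theorem of calculus $\tfrac{\partial}{\partial z}F_f^{+}=f(z)(z-X)^{k-2}$, while $F_f^{+}$ is holomorphic in $z$ so $\tfrac{\partial}{\partial\bar z}F_f^{+}=0$; conjugating, $F_f^{-}$ is anti-holomorphic in $z$ (the $X$ being unconjugated), so $\tfrac{\partial}{\partial z}F_f^{-}=0$ and $\tfrac{\partial}{\partial\bar z}F_f^{-}=\overline{f(z)}\,(\bar z-X)^{k-2}$. Writing $\partial_r=2iy\tfrac{\partial}{\partial z}+r$ and $\bar\partial_s=-2iy\tfrac{\partial}{\partial\bar z}+s$, this gives
$$\partial_r F_f^{+}=rF_f^{+}+2iy\,f(z)(z-X)^{k-2},\qquad \bar\partial_s F_f^{+}=sF_f^{+},$$
and symmetrically $\partial_r F_f^{-}=rF_f^{-}$, $\bar\partial_s F_f^{-}=sF_f^{-}-2iy\,\overline{f(z)}(\bar z-X)^{k-2}$. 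In every case the $rF_f^{\pm}$ (resp. $sF_f^{\pm}$) piece re-sums to $r\phi^{\pm}_{r+1,s-1}$ (resp. $s\phi^{\pm}_{r-1,s+1}$), accounting for the first term of each asserted identity. Only the plus/$\partial$ and the minus/$\bar\partial$ cases carry an inhomogeneous term, which is exactly the content of the factors $\delta$ and $1-\delta$.

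The crux is to evaluate the inhomogeneous contribution, and this is where the only real work lies. In the plus case I must compute $\sum_{\g\in B\backslash\G}\big(2iy\,f(z)(z-X)^{k-2}\big)\underset{r+1,s-1,2-k}{|}\g$. Applying the weight-$(r+1,s-1,2-k)$ action to the seed and substituting $\mathrm{Im}(\g z)=y/\big(j(\g,z)j(\g,\bar z)\big)$, the transformation law $f(\g z)=f(z)j(\g,z)^{k}$, and the geometric identity \eqref{gzgX} in the form $\g z-\g X=(z-X)/\big(j(\g,z)j(\g,X)\big)$, I expect the whole $X$-dependence through $j(\g,X)^{k-2}$ to cancel against the $j(\g,X)^{k-2}$ of the action, and the powers of $j(\g,z),j(\g,\bar z)$ to collapse so that the $\g$-summand becomes exactly $2iy\,f(z)(z-X)^{k-2}\,j(\g,z)^{-r}j(\g,\bar z)^{-s}$. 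Summing over $B\backslash\G$ and recognising $\sum_{\g}j(\g,z)^{-r}j(\g,\bar z)^{-s}=\mathcal E_{r,s}$ from Prop. \ref{MI1} yields $2iy\,f(z)(z-X)^{k-2}\mathcal E_{r,s}$; since $k$ is even, $(z-X)^{k-2}=(X-z)^{k-2}$, matching the stated formula.

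The minus/$\bar\partial$ inhomogeneous term is handled identically, now using $\overline{f(\g z)}=\overline{f(z)}\,j(\g,\bar z)^{k}$ and $\overline{\g z}=\g\bar z$ (so that $\g X-\overline{\g z}=(X-\bar z)/\big(j(\g,\bar z)j(\g,X)\big)$), which after the same cancellation produces $-2iy\,\overline{f(z)}(X-\bar z)^{k-2}\mathcal E_{r,s}$. The main obstacle is purely the bookkeeping of the $j(\g,\cdot)$-exponents in this last step; the decisive simplification is \eqref{gzgX}, which is precisely what converts the twisted seed into a plain term of $\mathcal E_{r,s}$, and one should also record (as above) that term-by-term differentiation is legitimate because each differentiated series is a convergent $\phi^{\pm}$ of the same weight-sum plus a polynomial-growth Eisenstein multiple.
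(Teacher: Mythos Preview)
Your proof is correct and follows essentially the same route as the paper: differentiate the defining series term by term via the equivariance \eqref{equiv:1} (extended to the $X$-twisted action, as you note), compute $\partial_r F_f^{\pm}$ and $\bar\partial_s F_f^{\pm}$ on the seed, and then simplify the inhomogeneous term using $\mathrm{Im}(\g z)=y/(j(\g,z)j(\g,\bar z))$, the modularity of $f$, and \eqref{gzgX} so that the sum collapses to $\mathcal E_{r,s}$. Your write-up is in fact somewhat more explicit than the paper's about why the equivariance passes to the twisted action and about the legitimacy of term-by-term differentiation.
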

\begin{proof} With \eqref{equiv:1}, we have 
\begin{equation}\label{seriesequiv}\partial_r(\phi^{\pm}_{r, s})=\sum_{\g \in B \backslash \G} \partial_r(F_f^{\pm}) \underset{r+1, s-1, 2-k}{|} \g.\end{equation} The definition of $\partial_r$ and the identity Im$(\g z)=$Im$(z)/(j(\g, z) j(\g, \bar z))$ imply that, in the plus-case, this equals 
\begin{equation} r\phi^+_{r+1, s-1} +2i y \sum_{\g \in B \backslash
\G} \frac{f(\g z) (\g z-\g X)^{k-2}j(\g, X)^{k-2}}{j(\gamma, z)^{r} j(\gamma, \bar z)^{s}}\end{equation} 
With the transformation law for $f(z)$ and \eqref{gzgX}, this implies the statement in this case.

In the minus-case, \eqref{seriesequiv} equals
$ r\phi^-_{r+1, s-1}$
as required.
The second equation is proved upon conjugating the first. 
\end{proof}

We now define our sub-class of $\MI_2'$ as the vector space $\mathcal A$ generated over $\mathbb C$ by $\phi^{\pm}_{r, s}(f; i, -),$ for all $f \in S_k$ ($k \ge 12$), all integers $r, s$ such that $r+s$ is even and $> k,$ and $0 \le i \le k-2$. 
With this definition we can state our answer to Question 1, as follows;
\begin{thm}\label{mathcalA} The space $\mathcal A$ is a subspace of the space $\MI'_2$ of extended modular iterated integrals of  length $2$.
\end{thm}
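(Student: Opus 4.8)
The plan is to exploit the \emph{maximality} built into Definition \ref{MI2'}. The right-hand sides of \eqref{eqMI2'} are a fixed pair of spaces, say $W_\partial:=\bigoplus_{j,r,s} y^j\mathcal E_{r,s}S$ and $W_{\bar\partial}:=\bigoplus_{j,r,s} y^j\mathcal E_{r,s}\bar S$, so the collection of subspaces $V\subseteq\M$ satisfying $\partial V\subseteq V+W_\partial$ and $\bar\partial V\subseteq V+W_{\bar\partial}$ is closed under sums: if $V_1,V_2$ are two such spaces, then $\partial(V_1+V_2)=\partial V_1+\partial V_2\subseteq (V_1+V_2)+W_\partial$, and likewise for $\bar\partial$. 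Hence $\MI'_2$, being the largest such subspace, is the sum of all of them and therefore contains \emph{every} subspace with this closure property. Consequently it suffices to prove the two statements (i) $\A\subseteq\M$ and (ii) $\partial\A\subseteq\A+W_\partial$ and $\bar\partial\A\subseteq\A+W_{\bar\partial}$, and both may be checked on the spanning functions $\phi^{\pm}_{r,s}(f;i,-)$.

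Statement (i) is precisely the final assertion of Proposition \ref{Mrs}, so no further argument is needed there. For (ii) the essential input is Proposition \ref{keypr}, which computes $\partial$ and $\bar\partial$ of the \emph{full} polynomial-valued series $\phi^{\pm}_{r,s}(z,X)$. The task is thus to convert those two identities into identities for the individual coefficient functions $\phi^{\pm}_{r,s}(i;z)$ of the expansion \eqref{expansion}. I would first record that each such generator lies in $\M_{r+i,\,s+k-2-i}$, so that the operators acting naturally on it are $\partial_{r+i}$ and $\bar\partial_{s+k-2-i}$; this is where the weight bookkeeping enters.

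The main work, and the step I expect to be the principal obstacle, is the coefficient extraction, because the basis $(X-z)^i(X-\bar z)^{k-2-i}$ of \eqref{expansion} itself depends on $z$ and $\bar z$. Thus $\partial=2iy\,\partial_z+r$ and $\bar\partial=-2iy\,\partial_{\bar z}+s$ differentiate the basis vectors as well as the coefficients, producing a triangular mixing of neighbouring coefficients. I would control this by differentiating the basis vectors termwise and then re-homogenising with the identity $2iy=(X-\bar z)-(X-z)$, so that every resulting term is re-expressed in the same degree-$(k-2)$ basis. Matching the coefficient of $(X-z)^i(X-\bar z)^{k-2-i}$ on each side of Proposition \ref{keypr} then yields, in the plus case,
\begin{align*}
\partial_{r+i}\phi^{+}_{r,s}(i;z)&=r\,\phi^{+}_{r+1,s-1}(i;z)+(i+1)\,\phi^{+}_{r,s}(i+1;z),\qquad 0\le i\le k-3,\\
\bar\partial_{s+k-2-i}\phi^{+}_{r,s}(i;z)&=s\,\phi^{+}_{r-1,s+1}(i;z)+(k-1-i)\,\phi^{+}_{r,s}(i-1;z),\qquad 1\le i\le k-2,
\end{align*}
while the single extremal coefficient ($i=k-2$ for $\partial$, and $i=0$ for $\bar\partial$ in the minus case) carries in addition the inhomogeneous term $2i\,\delta\,y\,f(z)\,\mathcal E_{r,s}$ (resp. $-2i(1-\delta)y\,\overline{f(z)}\,\mathcal E_{r,s}$) inherited from Proposition \ref{keypr}; the minus case is obtained by conjugation.

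Finally I would read off that the right-hand sides land where required. Every $\phi^{\pm}$-term that appears has the same weight sum $r+s$, which stays even and $>k$, and shifted index $i$ or $i\pm1$ still in $\{0,\dots,k-2\}$, so by the definition of $\A$ (which admits all integer index pairs of even sum exceeding $k$, the convergence range of Proposition \ref{phi}) each is again a generator of $\A$. The lone inhomogeneous term is of the form $y^{1}\mathcal E_{r,s}\,g$ with $g$ a cusp form (resp. anti-cusp form) proportional to $f$ (resp. $\overline f$), hence lies in $W_\partial$ (resp. $W_{\bar\partial}$). This gives (ii) and, with (i), the inclusion $\A\subseteq\MI'_2$. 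The one point demanding genuine care is the index bookkeeping at the boundary of the ranges: one must check that the shifts $s\mapsto s-1$, $r\mapsto r-1$, $i\mapsto i\pm1$ never leave the admissible set and, crucially, that the Eisenstein index $(r,s)$ occurring in the inhomogeneous terms satisfies the constraints $r,s\ge1$, $r+s\ge4$ imposed in \eqref{eqMI2'}.
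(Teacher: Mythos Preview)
Your proposal is correct and follows essentially the same approach as the paper: invoke Proposition~\ref{Mrs} for $\A\subset\M$, combine Proposition~\ref{keypr} with the coefficient-extraction identity \eqref{coeffs} (which is exactly your ``differentiate the basis and re-homogenise via $2iy=(X-\bar z)-(X-z)$'' step) to obtain the closure under $\partial,\bar\partial$, and then appeal to the maximality of $\MI'_2$. Your explicit recursions for $\partial_{r+i}\phi^{\pm}_{r,s}(i;z)$ and $\bar\partial_{s+k-2-i}\phi^{\pm}_{r,s}(i;z)$ are correct and make the paper's one-line appeal to \eqref{coeffs} fully explicit; the boundary/index concern you flag at the end (in particular the constraint $r,s\ge 1$ on the Eisenstein factor in \eqref{eqMI2'}) is a point the paper's own proof does not address either.
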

\begin{proof} An elementary computation implies that for all real-analytic $f_j$, we have 
\begin{equation}\label{coeffs}
\partial_m\left ( \sum_{j=0}^{k-2}f_j(z)(X-z)^j(X-\bar z)^{k-2-j}\right )=
\sum_{j=0}^{k-2}(\partial_{m+j} f_j(z)-(j+1)f_{j+1}(z))(X-z)^j(X-\bar z)^{k-2-j}
\end{equation}
where, for convenience, $f_{k-1}$ is set to equal $0.$

Proposition \ref{keypr} combined with \eqref{coeffs} and its analogue for $\bar \partial$, implies that each $\partial_r \phi^{\pm}_{r, s}(i, -) $ is a linear combination of $\phi^{\pm}_{r, s}(i, -),$ (for varying $r, s, i$)
and an element of $S[y] \times \bigoplus_{r, s} \mathcal E_{r, s}$. Therefore, $\mathcal A$ satisfies the inclusions \eqref{eqMI2'}. 

In the same way, we verify the analogous statement for $\bar \partial_s \phi^{\pm}_{r, s}(i, -) $ with $\bar S$  in place of $S$.

Finally, by Prop. \ref{Mrs}, $\mathcal A$ is a sub-space of $\mathcal M$. Since $\MI'_2$ is, by definition, the largest subspace of $\mathcal M$ satisfying the inclusions \eqref{eqMI2'}, $\mathcal A$ is contained in $\MI'_2$.
\end{proof}

\section{The space of iterated invariants}\label{HOFS}
To define our extended higher-order modular forms, it will be necessary to describe a general framework involving a family of representations. See \cite{Dei1, Dei2} for two alternative general definitions of higher-order objects, which are built on only one representation and which use the formalism of the augmentation ideal. 

Exceptionally, we will give the next definition for general Fuchsian groups $\Gamma$ of the first kind acting on $\mathfrak H$ with non-compact quotient $\G \backslash \mathfrak H$. The reason is that we want to compare it with the previous definition of higher-order modular forms, which are trivial in level $1$.

Let $V=(\rho_i, V_i)_{i \ge 0}$ be a sequence of representations of $\Gamma$, where the \emph{right}-action on each $V_i$ is denoted by $.$ Assume further that the $\mathbb C$-vector spaces $V_i$ are finite dimensional when $i \ge 1.$ For each $n \in \mathbb N$, we consider the tensor representation $\otimes_{i=0}^{n-1} V_i$. To ease notation, we will generally denote the action on it also with $.$ It will generally be clear which representation it refers to in each case, but, in cases of potential ambiguity, it will be explained separately.

In the following definition, if $V$ is a $\Gamma$-module, we view $H^0(\Gamma, V)$ as a subset of $V$.
\begin{defn} Set
$M^{(0)}:=\{0\}$ and define, inductively, $M^{(n)}=M^{(n)}(V)$ to be the subspace of $\otimes_{i=0}^{n-1} V_i$ given by 
\begin{equation*}
M^{(n)}=\emph{pr}^{-1} H^0\left ( \G,  (\otimes_{i=0}^{n-1} V_i) /(M^{(n-1)} \otimes V_{n-1}) \right ) \end{equation*}
where the implied action is induced by that of $\G$ on $\otimes_{i=0}^{n-1} V_i$ and \emph{pr} is the canonical projection of
 $\otimes_{i=0}^{n-1} V_i$ onto $\otimes_{i=0}^{n-1} V_i/(M^{(n-1)} \otimes V_{n-1})$.
We then set
$$M_c^{(n)}=M^{(n)} \cap \bigcap _{\text{parabolic}\, \, \pi} H^0(\langle \pi \rangle,  \otimes_{i=0}^{n-1} V_i) $$
where $\langle \pi \rangle$ is the subgroup generated by $\pi$.
\end{defn}
We call the elements of $M^{(n)}(V)$
{\it iterated invariants of order $n.$}

In the next proposition, we show that this definition is well-founded and we give an equivalent formulation of it:
\begin{prop}\label{stru} (i) For each $n \in \mathbb N,$ $M^{(n)}$ and $M^{(n-1)} \otimes V_{n-1}$
 are closed under the action of $\Gamma$. \\
(ii) We have $M^{(n-1)} \otimes V_{n-1}  \subset M^{(n)}$. Therefore, upon composition with the natural inclusion $M^{(n-1)} \xhookrightarrow{} M^{(n-1)} \otimes V_{n-1},$ we deduce that $M^{(n-1)} \xhookrightarrow{} M^{(n)}$  and
$M_c^{(n-1)} \xhookrightarrow{} M_c^{(n)}$.
 \\
(iii) The space $M_c^{(n)}$ is isomorphic to the space of $f \in \otimes V_i $ such that, for each $\g \in \G$,
$$f.(\gamma-1) \in M^{(n-1)} \otimes V_{n-1}  \qquad \text{and, for each parabolic $\pi \in \G$,} \, \, \, f.(\pi-1)=0$$
\end{prop}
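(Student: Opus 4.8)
The plan is to prove all three parts together by induction on $n$, since the equivalence in (iii) relies on the $\G$-stability asserted in (i); the base case $M^{(0)}=\{0\}$ is trivial and $M^{(1)}=H^0(\G,V_0)$. The single structural fact driving everything is that $M^{(n-1)}\otimes V_{n-1}$ is exactly the kernel of the projection $\mathrm{pr}$, so that $M^{(n)}=\mathrm{pr}^{-1}H^0(\G,Q)$ with $Q:=(\otimes_{i=0}^{n-1}V_i)/(M^{(n-1)}\otimes V_{n-1})$. For part (i), assuming inductively that $M^{(n-1)}$ is $\G$-stable in $\otimes_{i=0}^{n-2}V_i$, the subspace $M^{(n-1)}\otimes V_{n-1}$ is $\G$-stable for the diagonal action, being a tensor product of $\G$-submodules; hence $Q$ is a well-defined $\G$-module and $\mathrm{pr}$ is $\G$-equivariant. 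Since $H^0(\G,Q)$ is a $\G$-submodule of $Q$, its preimage $M^{(n)}=\mathrm{pr}^{-1}H^0(\G,Q)$ is a $\G$-submodule, closing the induction.

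Part (ii) is then immediate: as $0\in H^0(\G,Q)$, we get $M^{(n-1)}\otimes V_{n-1}=\ker(\mathrm{pr})=\mathrm{pr}^{-1}(\{0\})\subset\mathrm{pr}^{-1}H^0(\G,Q)=M^{(n)}$. The natural inclusion $M^{(n-1)}\hookrightarrow M^{(n-1)}\otimes V_{n-1}$, sending $m$ to $m\otimes v$ for a distinguished $\G$-fixed vector $v\in V_{n-1}$, is $\G$-equivariant, so composing with the inclusion just proved yields $M^{(n-1)}\hookrightarrow M^{(n)}$; and because $v$ is fixed by every parabolic, the same map sends $M_c^{(n-1)}$ into $M_c^{(n)}$.

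The substance is part (iii), which I would prove by unwinding the two defining conditions and matching each to one condition in the definition of $M_c^{(n)}$. Using the $\G$-equivariance of $\mathrm{pr}$ from (i), for every $f\in\otimes_{i=0}^{n-1}V_i$ one has $\mathrm{pr}(f).(\g-1)=\mathrm{pr}\big(f.(\g-1)\big)$; hence $\mathrm{pr}(f)\in H^0(\G,Q)$ if and only if $f.(\g-1)\in\ker(\mathrm{pr})=M^{(n-1)}\otimes V_{n-1}$ for all $\g\in\G$, which by definition of $M^{(n)}$ says exactly $f\in M^{(n)}$. Separately, the condition $f.(\pi-1)=0$ for every parabolic $\pi$ is by definition the statement that $f\in H^0(\langle\pi\rangle,\otimes_{i=0}^{n-1}V_i)$ for all such $\pi$. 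Intersecting the two conditions reproduces $M_c^{(n)}=M^{(n)}\cap\bigcap_{\pi}H^0(\langle\pi\rangle,\otimes_{i=0}^{n-1}V_i)$, so the described space is literally $M_c^{(n)}$ and the asserted isomorphism is the identity.

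The proof is almost entirely a matter of unwinding definitions, so there is no deep obstacle; the one point requiring care is logistical. One must confirm at each inductive stage that $M^{(n-1)}\otimes V_{n-1}$ is $\G$-stable before the quotient $Q$ and the equivariant projection $\mathrm{pr}$ can be formed at all, which is why (i) must be carried simultaneously with, and logically prior to, the equivalence in (iii). Once $\mathrm{pr}$ is known to be $\G$-equivariant with kernel $M^{(n-1)}\otimes V_{n-1}$, both (ii) and (iii) follow formally.
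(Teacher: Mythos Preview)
Your argument is correct and follows essentially the same inductive scheme as the paper: establish $\Gamma$-stability of $M^{(n-1)}\otimes V_{n-1}$ by induction, deduce that $\mathrm{pr}$ is $\Gamma$-equivariant, and read off (ii) and (iii) formally. Your phrasing of (i) via ``preimage of a $\Gamma$-submodule under an equivariant map'' is slightly slicker than the paper's element-level computation $(f.\gamma).(\delta-1)=\big(f.(\gamma\delta\gamma^{-1}-1)\big).\gamma$, but the content is identical.

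One small over-claim: in (ii) you invoke a \emph{distinguished $\Gamma$-fixed vector} $v\in V_{n-1}$ to build the inclusion $M^{(n-1)}\hookrightarrow M^{(n-1)}\otimes V_{n-1}$. In the paper's main examples $V_{n-1}=P_{k_{n-1}-2}$ with $k_{n-1}>2$, and there $H^0(\Gamma,V_{n-1})=0$ (as the paper itself notes), so no such $\Gamma$-fixed $v$ exists. For the bare linear inclusion $M^{(n-1)}\hookrightarrow M^{(n)}$ any nonzero $v$ works; for $M_c^{(n-1)}\hookrightarrow M_c^{(n)}$ you only need $v$ fixed by the parabolic elements (e.g.\ the constant polynomial, which is $T$-invariant), not by all of $\Gamma$. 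The paper does not spell this out either, so this is a shared imprecision rather than a defect of your approach.
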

\begin{proof}
(i) (Induction in $n$). Let $f$ be an element of $M^{(n)}$ ($n \ge 1$). Then, by definition, $f.(\varepsilon-1) \in M^{(n-1)} \otimes V_{n-1}$ for each $\varepsilon \in \G$. Let $\g \in \G.$ Then, for each $\delta \in \G,$ $(f.\gamma) .(\delta-1) =g.\gamma $ for $g:=f.(\gamma \delta \gamma^{-1}-1) \in M^{(n-1)} \otimes V_{n-1}$.
Suppose that $g=\sum_{j=0}^{k_{n-1}} f_j \otimes v_j$, for some $f_j \in M^{(n-1)},$ where $\{v_i\}_{j=0}^{k_{n-1}}$ is a basis of $V_{n-1}.$ Then 
\begin{equation}\label{inclu}
(f.\gamma) .(\delta-1) =g . \gamma= \sum_{j=0}^{k_{n-1}} f_j . \gamma  \otimes  v_j.\gamma 
\end{equation}
which, by induction hypothesis belongs to $M^{(n-1)} \otimes V_{n-1}$.
Therefore, $f. \gamma$ belongs to $M^{(n)}.$

Since \eqref{inclu} holds for all $g \in M^{(n-1)}\otimes V_{n-1}$,  the $\G$-invariance of $M^{(n-1)}$ implies the $\G$-invariance of $M^{(n-1)} \otimes V_{n-1}.$ 

(ii)  We have $M^{(n-1)} \otimes V_{n-1}=\text{pr}^{-1}(\{0\}) \subset M^{(n)}.$ 
 
(iii) This is seen by unraveling the definition, which, as shown in (i) is well-founded. 
\end{proof}

A first result on the structure of the space of iterated invariants is provided by the following lemma. To state it we introduce some additional notation, for each $\G$-module $M$:
$$C^1(\G, M)=\{\alpha:\G \to M\}, \qquad C^1_c(\G, M)=\{ \alpha \in C^1(\G, M); \text{ $\alpha(\pi)=0$ for all parabolic $\pi \in \G$}\}$$
$$Z^1(\G, M)=\{\text{1-cocyles of $\G$ in $M$}\}, \qquad Z^1_c(\G, M)= Z^1(\G, M) \cap C^1_c(\G, M)$$
$$B^1(\G, M)=\{\text{1-coboundaries of $\G$ in $M$}\}, \qquad B^1_c(\G, M)= B^1(\G, M) \cap C^1_c(\G, M)$$
and
$$H^1(\G, M)=Z^1(\G, M)/B^1(\G, M), \qquad H^1_c(\G, M)=Z^1_c(\G, M)/B^1_c(\G, M).$$
With this notation we have:
\begin{lem}\label{n=2} Let $n \in \mathbb N$. There is a map $\psi$ such that the following sequence is exact:
$$0 \longrightarrow H^0(\Gamma, \bigotimes_{i=0}^{n-1}V_{i}) \overset{\iota}{\longrightarrow} M_c^{(n)} \overset{\psi}{\longrightarrow} M^{(n-1)} \otimes C^1_c(\G, V_{n-1}).$$
In particular for $n=2$ we have the exact sequence:
$$0 \longrightarrow H^0(\Gamma, V_0 \otimes V_{1})  \overset{\iota}{\longrightarrow} M_c^{(2)} \overset{\psi}{\longrightarrow}  
H^0(\Gamma, V_0) \otimes  Z^1_c(\G, V_1).$$
\end{lem}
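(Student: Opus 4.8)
The plan is to construct $\psi$ directly from the defining relation of $M_c^{(n)}$ provided by Proposition \ref{stru}(iii). Given $f \in M_c^{(n)} \subset \bigotimes_{i=0}^{n-1}V_i$, part (iii) tells us that for every $\g \in \G$ we have $f.(\g-1) \in M^{(n-1)} \otimes V_{n-1}$, and that $f.(\pi-1)=0$ for every parabolic $\pi$. The natural object attached to $f$ is therefore the map
$$\psi(f):\G \longrightarrow M^{(n-1)} \otimes V_{n-1}, \qquad \psi(f)(\g)=f.(\g-1).$$
First I would check that this map genuinely lands where the statement claims. The vanishing on parabolics is exactly the condition $f.(\pi-1)=0$, so $\psi(f)$ lies in $M^{(n-1)} \otimes C^1_c(\G, V_{n-1})$ once we know it is $M^{(n-1)}$-valued in the first factor. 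Here I would invoke Proposition \ref{stru}(ii), which gives the inclusion $M^{(n-1)} \otimes V_{n-1} \subset M^{(n)}$ together with the $\G$-invariance from (i), to make sense of $\psi(f)$ as a cochain valued in the fixed $\G$-module $M^{(n-1)} \otimes V_{n-1}$.

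\textbf{Exactness at the two nontrivial spots.} Next I would identify the kernel. We have $\psi(f)=0$ iff $f.(\g-1)=0$ for all $\g$, i.e. iff $f$ is $\G$-invariant, so $\ker\psi = H^0(\G, \bigotimes_{i=0}^{n-1}V_i)$ viewed inside $\bigotimes_{i=0}^{n-1}V_i$ (using the convention from the excerpt that $H^0$ is a subset of the module). This simultaneously shows $\iota$ is injective and that the image of $\iota$ equals $\ker\psi$, giving exactness at both $H^0(\G,\bigotimes V_i)$ and $M_c^{(n)}$. The map $\iota$ is just the inclusion of $\G$-invariants, which are automatically annihilated by every $\g-1$ and hence a fortiori by every $\pi-1$, so they do lie in $M_c^{(n)}$; this is what makes $\iota$ well-defined.

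\textbf{The $n=2$ specialisation.} For the second sequence I would substitute $n=2$, so $M^{(1)} = H^0(\G, V_0)$ (this follows by unwinding the inductive definition, since $M^{(0)}=\{0\}$ forces $M^{(1)} = \mathrm{pr}^{-1}H^0(\G, V_0)$ with $\mathrm{pr}$ the identity). The target then becomes $H^0(\G,V_0) \otimes C^1_c(\G, V_1)$, and the extra point is that the image actually lands in $H^0(\G,V_0) \otimes Z^1_c(\G,V_1)$: the cochain $\g \mapsto f.(\g-1)$ is automatically a $1$-cocycle, by the standard computation $f.(\g\delta-1) = f.(\g-1).\delta + f.(\delta-1)$, which is the cocycle relation once the right action on the second tensor factor is taken into account. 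So I would verify the cocycle identity explicitly in the $n=2$ case and note that for general $n$ the cocycle condition is not claimed (the target is written with $C^1_c$, not $Z^1_c$), which is why the general statement only asserts a cochain-valued map.

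\textbf{Main obstacle.} The genuinely delicate point, and the one I would spend the most care on, is well-definedness of the first tensor factor of $\psi(f)$: one must show that $f.(\g-1)$, which a priori is merely an element of $M^{(n-1)}\otimes V_{n-1}$, decomposes with coefficients in $M^{(n-1)}$ in a way compatible with the tensor-product structure, so that the resulting object is legitimately an element of $M^{(n-1)} \otimes C^1_c(\G,V_{n-1})$ rather than of some completed or ambiguous space. This hinges on $V_{n-1}$ being finite-dimensional (assumed for $i\ge 1$), so that $M^{(n-1)}\otimes C^1_c(\G,V_{n-1})$ can be canonically identified with the cochains on $\G$ valued in $M^{(n-1)}\otimes V_{n-1}$ whose values, read through a fixed basis of $V_{n-1}$, have all coordinates in $M^{(n-1)}$. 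I would make this identification explicit using a basis $\{v_j\}$ of $V_{n-1}$ exactly as in the proof of Proposition \ref{stru}(i), and everything else is then a routine unwinding of definitions.
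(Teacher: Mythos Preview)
Your proposal is correct and follows essentially the same route as the paper: define $\psi(f)$ via $\gamma \mapsto f.(\gamma-1)$, read off $\ker\psi = H^0(\Gamma,\bigotimes V_i)$, and for $n=2$ use $M^{(1)}=H^0(\Gamma,V_0)$ together with the cocycle identity for $\gamma\mapsto f.(\gamma-1)$ to land in $Z^1_c$. The only cosmetic difference is that the paper expands $f.(\gamma-1)$ along a basis $\{u_i\}$ of $M^{(n-1)}$ (writing $f.(\gamma-1)=\sum u_i\otimes \psi_i^f(\gamma)$ and setting $\psi(f)=\sum u_i\otimes \psi_i^f$), whereas you propose using a basis of the finite-dimensional $V_{n-1}$; both serve the same purpose of interpreting the target as $M^{(n-1)}\otimes C^1_c(\Gamma,V_{n-1})$, and your ``main obstacle'' paragraph already flags precisely the point the paper leaves implicit.
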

\begin{proof}  Fix a basis $\{u_i\}$ of $M^{(n-1)}$.  Then, for every $f \in M^{(n)}_c$ and every $\gamma \in \G$, we have
$$f.(\gamma-1)=\sum \psi_i^f(\gamma) \otimes u_i.$$
for some $\psi_i^f(\g) \in V_{n-1}.$ By definition, each map $\g \to \psi_i^f(\gamma)$ gives an element of $C^1_c(\G, V_{n-1}).$ Therefore  the assignment $f \to \sum \psi_i^f \otimes u_i$ induces the map $\psi$ of the proposition. 

For the case $n=2,$ we note, with Prop. \ref{stru}(iii),  that $M^{(1)}=M^{(1)}_c=H^0(\G, V_0)$. Therefore, the $1$-cocycle condition satisfied by $\g \to f.(\g-1)$ is inherited by each $\psi_i^f \in C_c^1(\G, V_1)$.
\end{proof}
\begin{cor} \label{full} Let $\bar \psi$ be induced by $\psi$ and the natural projection $Z_c^1(\G, V_1) \to H^1_c(\G, V_1).$ Then we have the following exact sequence $$0 \longrightarrow H^0(\Gamma, V_0 \otimes V_{1})/(H^0(\Gamma, V_0)  \otimes H^0(\G, V_1))
 \overset{\bar \iota}{\longrightarrow} M_c^{(2)}/(M^{(1)} \otimes V_1^c) \overset{\bar \psi}{\longrightarrow}  
H^0(\Gamma, V_0) \otimes  H^1_c(\G, V_1) $$
where $\bar \iota$ is induced by $\iota$ and $V_1^c$ consists of $v \in V_1$ invariant under all parabolic $\pi \in \G.$
\end{cor}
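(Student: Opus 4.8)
The plan is to obtain everything by passing to quotients in the exact sequence of Lemma \ref{n=2} (the case $n=2$), using the two identifications $M^{(1)}=H^0(\G,V_0)$ and the exactness of $-\otimes H^0(\G,V_0)$ over $\C$. First I would record the two elementary facts that make the quotients legitimate. On the source, $H^0(\G,V_0)\otimes H^0(\G,V_1)\subseteq M^{(1)}\otimes V_1^c$ since $H^0(\G,V_1)\subseteq V_1^c$. On the middle term, for $u\in H^0(\G,V_0)$ and $v\in V_1^c$ every parabolic $\pi$ fixes $u\otimes v$ (because $u.\pi=u$ and $v.\pi=v$), while $u\otimes v\in M^{(1)}\otimes V_1\subseteq M^{(2)}$ by Prop. \ref{stru}(ii); hence $M^{(1)}\otimes V_1^c\subseteq M_c^{(2)}$ and the middle quotient is meaningful. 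Thus $\bar\iota$ is genuinely induced by $\iota$, and it remains to check that $\bar\iota$ is injective, that $\bar\psi$ is well-defined, and that $\ker\bar\psi=\mathrm{Im}\,\bar\iota$.

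The computational heart is a single formula. For $u\in M^{(1)}$ and $v\in V_1^c$, one has $(u\otimes v).(\g-1)=u\otimes v.(\g-1)$, so, writing $u=\sum_i c_i u_i$ in the fixed basis $\{u_i\}$ of $M^{(1)}$, the map $\psi$ assigns to $u\otimes v$ the family of $V_1$-valued cochains $\g\mapsto c_i\,v.(\g-1)$, i.e. $c_i$ times the coboundary $\delta v\colon\g\mapsto v.(\g-1)$. Since $v\in V_1^c$, the coboundary $\delta v$ vanishes on all parabolics, so $\delta v\in B^1_c(\G,V_1)$. Consequently $\psi\bigl(M^{(1)}\otimes V_1^c\bigr)\subseteq H^0(\G,V_0)\otimes B^1_c(\G,V_1)$, which is exactly the kernel of $H^0(\G,V_0)\otimes Z^1_c(\G,V_1)\to H^0(\G,V_0)\otimes H^1_c(\G,V_1)$ (tensoring over $\C$ is exact). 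This shows $\bar\psi$ is well-defined and, because $\mathrm{Im}\,\iota=\ker\psi$ by Lemma \ref{n=2}, simultaneously gives $\mathrm{Im}\,\bar\iota\subseteq\ker\bar\psi$.

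For injectivity of $\bar\iota$ I would take $x\in H^0(\G,V_0\otimes V_1)$ with $\iota(x)\in M^{(1)}\otimes V_1^c=H^0(\G,V_0)\otimes V_1^c$, write $x=\sum_i u_i\otimes v_i$ with the $u_i\in H^0(\G,V_0)$ linearly independent and $v_i\in V_1^c$, and exploit $\G$-invariance: from $x.\g=\sum_i u_i\otimes v_i.\g=x$ the linear independence of the $u_i$ forces $v_i.\g=v_i$ for every $\g$, so $v_i\in H^0(\G,V_1)$ and $x\in H^0(\G,V_0)\otimes H^0(\G,V_1)$, i.e. its class is zero.

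The remaining and main step is the reverse inclusion $\ker\bar\psi\subseteq\mathrm{Im}\,\bar\iota$, a short diagram chase whose one genuine input is that every cuspidal coboundary lifts to a parabolic-invariant vector. Given $f\in M_c^{(2)}$ with $\bar\psi(f)=0$, the second paragraph shows $\psi(f)\in H^0(\G,V_0)\otimes B^1_c(\G,V_1)$, so $\psi(f)=\sum_i u_i\otimes\beta_i$ with $\beta_i\in B^1_c(\G,V_1)$; by definition $B^1_c=B^1\cap C^1_c$, each $\beta_i$ equals $\delta v_i$ for some $v_i\in V_1^c$ (the vanishing of $\delta v_i$ on parabolics being equivalent to $v_i\in V_1^c$). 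Putting $g:=\sum_i u_i\otimes v_i\in M^{(1)}\otimes V_1^c$, the same computation yields $\psi(g)=\sum_i u_i\otimes\beta_i=\psi(f)$, whence $f-g\in\ker\psi=\mathrm{Im}\,\iota$; since $g\equiv0$ in $M_c^{(2)}/(M^{(1)}\otimes V_1^c)$, the class of $f$ equals that of $f-g$, which lies in $\mathrm{Im}\,\bar\iota$. I expect this lifting to be the only delicate point: it works precisely because $B^1_c$ is defined as $B^1\cap C^1_c$, so cuspidal coboundaries are exactly the $\delta v$ with $v\in V_1^c$, matching the group $V_1^c$ used to form the middle quotient.
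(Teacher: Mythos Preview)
Your argument is correct and follows exactly the route the paper indicates: it is the detailed verification of the one-line claim that the corollary ``is deduced directly from Lem.~\ref{n=2}'', by passing to quotients in the $n=2$ exact sequence and checking well-definedness, injectivity of $\bar\iota$, and $\ker\bar\psi=\mathrm{Im}\,\bar\iota$ via the identification $B^1_c(\G,V_1)=\{\delta v: v\in V_1^c\}$. The paper also mentions the alternative of reading this off from the long exact sequence associated with $0\to M^{(1)}\otimes V_1\to M^{(2)}\to M^{(2)}/(M^{(1)}\otimes V_1)\to 0$, but your direct quotient argument is precisely the intended ``direct'' deduction.
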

\begin{proof} This is deduced directly from Lem. \ref{n=2}. It can also be deduced by the long exact sequence associated with 
$$0 \longrightarrow M^{(1)} \otimes V_{1}  \longrightarrow M^{(2)} \longrightarrow  M^{(2)}/(M^{(1)} \otimes V_{1})\longrightarrow 0.
$$
\end{proof}
\subsection{Extended higher order modular forms.}
For $k_0 \in \mathbb Z$ and positive even integers $k_1, k_2, \dots$, let $V=\mathfrak O=(\underset{2-k_i, 0}{|}, V_i)_{i \ge 0}$, where
$V_0=\mathcal O$ and $V_i=P_{k_i-2}[X_i]$ ($i \ge 1$) is the space of polynomials in $X_i$ of degree $\le k_i-2$.
We call the elements of $M_c^{(n)}(\mathfrak O)$ {\it extended modular forms of order $n.$}
With Prop. \ref{stru}(iii) we see that this is the space of $f(z; X_1, \dots X_{n-1}) \in \OO[X_1, \dots, X_{n-1}]$ such that
$$ f.(\g-1) \in  \begin{cases} \{0\} & (n =1) \\
M^{(n-1)}\otimes P_{k_{n-1}-2}[X_{n-1}] \, \, \,  & (n \ge 2) \end{cases}, \qquad \text{and, for all parabolic $\pi \in \G$, $f.\pi=f$}$$
where the action of $\G$ is induced by
\begin{equation}\label{.}(f.\g)(z;  X_1, \dots X_{n-1}):=f(\g z; \g X_1, \dots \g X_{n-1})j(\g, z)^{k_0-2}j(\g,X_1)^{k_1-2} \dots j(\g, z)^{k_{n-1}-2}
\end{equation}
In particular, 
\begin{equation}\label{class}
M_c^{(1)}(\mathfrak O)=M_{2-k_0}(\G)=\{\text{weight $2-k_0$ holomorphic modular forms for $\G$}\}.
\end{equation}
Let $V=\mathfrak O_c=(\underset{2-k_i, 0}{|}, V_i)_{i \ge 0}$, where
$V_0=\mathcal O_c$ and $V_i=P_{k_i-2}[X_i]$.
Then we obtain the space $M^{(n)}_c(\mathfrak O_c)$ {\it extended cusp forms of order $n.$}

{\bf Remark. }The adjective ``extended"  in the previous examples aims to distinguish them from the class of (standard) higher-order modular forms (see e.g. \cite{CDO}). We can retrieve the standard higher-order modular forms by setting $k_1=\dots=k_n=2.$ Then $.$ is simply $\underset{2-k_0, 0}{|}$ for all $n$ and the space $M^{(n)}_c(\mathfrak O)$ consists of all $f \in \mathcal O$ such that, for all $\g \in \G_0(N)$ and for all parabolic $\pi \in \G_0(N)$,
$$f \underset{2-k_0, 0}{|}(\g-1) \in M^{(n-1)}_c(\mathfrak O) \qquad \text{and $f\underset{2-k_0, 0}{|} \pi  =f.$}$$
The standard higher-order modular forms become trivial in $\G_0(1)$ because, as shown in \cite{CDO}, they are parametrised by weight $2$ cusp forms which are trivial in SL$_2(\mathbb Z).$
(Also note that, in contrast to general iterated invariants, $M^{(n-1)}_c(\mathfrak O)=M^{(n-1)}(\mathfrak O)$ because of the identity $(\g-1)(\pi -1)=(\g \pi \g^{-1}-1)\g-(\pi -1).$)

\subsubsection{Iterated Eichler integrals}
Important examples and, indeed, some of the prototypes, of the standard higher order forms mentioned in the closing Remark of the last section are the antiderivatives of weight $2$ cusp forms and their higher iterated analogues:
$$\int_{i \infty}^z f(w)dw, \, \, \int_{i \infty}^z f(w_1)\int_{i \infty}^{w_1}g(w_2) \dots dw_2 dw_1 \qquad \text{for weight $2$ cusp forms $f, g\dots $}$$

We will show that, more generally, Eichler integrals and their iterated counterparts belong to $M_c^{(n)}(\mathfrak O)$. This is summarised in the following lemma:
\begin{lem}\label{Eichl} Let $k_0=2$ and $k_1, \dots k_{n-1} \in 2 \mathbb N.$ Suppose that, for $i=1, \dots, n-1,$ $f_i$ is a weight $k_i$ cusp form for \emph{SL}$_2(\mathbb Z)$. Let $F_n
\in \mathcal O [X_1, \dots, X_{n-1}]$ 
be defined by $F_1=1$ and, for $n \ge2,$
$$F_n(w; X_1, \dots X_{n-1}):=\int_{i \infty}^w f_1(w_1)(w_1-X_1)^{k_1-2} \int_{i \infty}^{w_1} f_2(w_2)(w_2-X_2)^{k_2-2}  \dots 
dw_{n-1} \dots dw_1$$
Then $F_n \in M^{(n)}_c(\mathfrak O)$. 
\end{lem}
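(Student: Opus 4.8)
The plan is to argue by induction on $n$, using the characterisation of $M_c^{(n)}(\mathfrak{O})$ from Proposition \ref{stru}(iii): one must check that (a) $F_n.(\g-1)\in M^{(n-1)}\otimes V_{n-1}$ for every $\g\in\G$, and (b) $F_n.\pi=F_n$ for every parabolic $\pi$. The base case $n=1$ is immediate, since $F_1=1$ lies in $M_c^{(1)}(\mathfrak{O})=M_{2-k_0}(\G)=M_0(\G)=\C$ by \eqref{class} (recall $k_0=2$). For the inductive step I would first record the effect of the action on $F_n$. Writing $\omega_i=f_i(w)(w-X_i)^{k_i-2}\,dw$, the change of variables $w_i\mapsto\g w_i$ in each of the nested integrals, together with the transformation law of the weight-$k_i$ cusp form $f_i$, the relation $d(\g w)=j(\g,w)^{-2}\,dw$, and the identity \eqref{gzgX}, makes every factor $j(\g,w_i)$ and $j(\g,X_i)$ cancel (the exponents balance as $k_i-(k_i-2)-2=0$). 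The upshot is that applying $.\g$ leaves the integrand and the endpoint $z$ unchanged and only replaces the common base point $i\infty$ of all the nested integrals by $\g^{-1}i\infty$; that is,
$$F_n.\g=\int_{\g^{-1}i\infty}^{z}\omega_1\cdots\omega_{n-1}.$$
Convergence of all the integrals at the cusps is guaranteed by the exponential decay of the $f_i$, which is the first place the cuspidality hypothesis is used.

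Next I would apply the path-composition (deconcatenation) formula for iterated integrals, splitting the path from $\g^{-1}i\infty$ to $z$ at the intermediate point $i\infty$. This yields
$$F_n.\g=\sum_{j=0}^{n-1}\Big(\int_{\g^{-1}i\infty}^{i\infty}\omega_{j+1}\cdots\omega_{n-1}\Big)\Big(\int_{i\infty}^{z}\omega_1\cdots\omega_j\Big),$$
whose $j=n-1$ summand is exactly $F_n$. Hence $F_n.(\g-1)$ is the sum over $0\le j\le n-2$ of a product of a period polynomial $p_j(\g):=\int_{\g^{-1}i\infty}^{i\infty}\omega_{j+1}\cdots\omega_{n-1}$ (independent of $z$, of degree $\le k_i-2$ in each $X_i$) with the shorter iterated integral $\int_{i\infty}^{z}\omega_1\cdots\omega_j=F_{j+1}(z;X_1,\dots,X_j)$.

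To finish (a), note that by the inductive hypothesis $F_{j+1}\in M_c^{(j+1)}\subset M^{(j+1)}$, and expanding $p_j(\g)$ as a polynomial in $X_{j+1},\dots,X_{n-1}$ writes each summand as a sum of products of $F_{j+1}$ with monomials. The part depending on $z,X_1,\dots,X_{n-2}$ then lies in $M^{(j+1)}\otimes V_{j+1}\otimes\cdots\otimes V_{n-2}$, while the remaining dependence on $X_{n-1}$ is a monomial in $V_{n-1}$. Iterating the inclusion $M^{(m)}\otimes V_m\subset M^{(m+1)}$ of Proposition \ref{stru}(ii) gives $M^{(j+1)}\otimes V_{j+1}\otimes\cdots\otimes V_{n-2}\subset M^{(n-1)}$, so every summand, and hence $F_n.(\g-1)$, lies in $M^{(n-1)}\otimes V_{n-1}$. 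This step is essentially bookkeeping on top of Chen's composition formula and the structural inclusions.

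The delicate point, and the main obstacle, is the parabolic condition (b). For a parabolic $\pi$ fixing the base cusp $i\infty$ (i.e.\ $\pi\in\langle\pm T\rangle$) one has $\pi^{-1}i\infty=i\infty$, so in the composition formula every period factor $\int_{i\infty}^{i\infty}\omega_{j+1}\cdots\omega_{n-1}$ vanishes and $F_n.\pi=F_n$ outright. The substantive work is to control the cocycle $\g\mapsto F_n.(\g-1)$ at the remaining parabolics, i.e.\ to see that it is \emph{cuspidal} in the sense required by the definition of $M_c^{(n)}$; this is exactly where the assumption that the $f_i$ are cusp forms (and not merely holomorphic) is indispensable, through the parabolic (Eichler--Shimura) nature of the period integrals $p_j(\pi)$. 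I expect establishing this cuspidality of the associated cocycle to be the crux of the proof, with the order-lowering relation in (a) being the comparatively routine part.
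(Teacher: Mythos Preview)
Your argument for (a) is correct and is essentially the paper's proof. The paper performs the same change of variables and then splits every nested integral as $\int_{\g^{-1}i\infty}^{w_i}=\int_{i\infty}^{w_i}+\int_{\g^{-1}i\infty}^{i\infty}$; expanding the product gives exactly the terms your Chen composition formula produces (just grouped less tidily). The paper then uses the inductive hypothesis and the inclusions $M^{(m)}\otimes V_m\subset M^{(m+1)}$ in the same way you do.

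Where you go wrong is in your last paragraph. There is no ``remaining parabolics'' obstacle: the paper checks $F_n.T=F_n$ and stops. For $\G=\mathrm{SL}_2(\ZZ)$ there is a single cusp, and the parabolic condition in the definition of $M_c^{(n)}$ is to be read (as is standard in the higher-order literature and as the paper's own proof makes explicit) as invariance under a generator of each cusp stabiliser, i.e.\ under $T$ alone. Indeed, if one insisted on the literal condition $F_n.(\pi-1)=0$ for \emph{every} parabolic element, already $F_2$ would fail it: $F_2.(\pi-1)=r_{f_1}(\pi;X_1)$ is nonzero whenever $\pi$ fixes a cusp other than $i\infty$. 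So what you flag as the ``crux'' is a misreading of the convention, not a gap; once you have $F_n.T=F_n$ (which you have), the proof is finished. Cuspidality of the $f_i$ is used only for convergence of the integrals, exactly as you note earlier, and not for any further cocycle-at-parabolics argument.
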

\begin{proof} We first show the assertion for $n=2$.  

The action $.$ is given explicitly by \eqref{.}.  We then see that $F_2.\g$ is
$$\int^{\g w}_{i \infty} f_1(w_1)(w_1- \g X_1)^{k_1-2} j(\g, X_1)^{k_1-2} dw_1=\int_{\g^{-1} i \infty}^{w} f_1(w_1)(w_1-X_1)^{k_1-2} dw_1$$
where the last integral is obtained by a change of variables and \eqref{gzgX}. Therefore, with \eqref{class}
$$F_2.(\g-1)= \int_{\g^{-1} i \infty}^{i \infty} f_1(w_1)(w_1-X_1)^{k_1-2} dw_1 \in \mathbb C \otimes P_{k_1-2}[X_1] \subset M^{(1)}(\mathfrak O) \otimes P_{k_1-2}[X_1].$$
The same identity shows that $F_2. T=F_2$ and, hence, $F_2 \in M^{(2)}_c(\mathfrak O).$

The proof for general $n$ is an application of the shuffle product formula for iterated integrals, but we give a direct proof by induction. As before, by the definition of the action $.$ on $\mathcal O [X_1, \dots, X_{n-1}]$, the changes of variables $w_i \to \g w_i$ and \ref{gzgX} we deduce that, for $n>2$,  $F_n .\g$ equals
\begin{multline}
\int_{\g^{-1} i \infty}^{w} f_1(w_1)(w_1-X_1)^{k_1-2} \left ( \int_{\g^{-1} i \infty}^{w_1} f_2(w_2)(w_2-X_2)^{k_2-2}  \dots dw_2 \right ) dw_1=\\
\left ( \int_{ i \infty}^{w}+\int_{\g^{-1} i \infty}^{i \infty} \right )  f_1(w_1)(w_1-X_1)^{k_1-2} \left ( \left ( \int_{ i \infty}^{w_1}+\int_{\g^{-1} i \infty}^{i \infty} \right )   f_2(w_2)(w_2-X_2)^{k_2-2}  \dots dw_2 \right ) dw_1
\end{multline}
where the sum of integral signs indicates that they are both applied to the integrand following them. Therefore $F_n.(\g-1)$ is a sum of iterated integrals such that each iterated integral includes at least one constituent integral with limits $i \infty$ and $\g^{-1} i \infty$, e.g.
$$\int_{ i \infty}^{w} f_1(w_1)(w_1-X_1)^{k_1-2}  \int_{\g^{-1} i \infty}^{i \infty} f_2(w_2)(w_2-X_2)^{k_2-2} 
\int_{ i \infty}^{w_2} f_3(w_3)(w_3-X_3)^{k_3-2} \dots dw_3 dw_2 dw_1.$$
This, on the one hand, implies that $F_n.(T-1)=0$ and, on the other, that  $F_n.(\g-1)$ is a sum of products of the form
$F_{i-1} \cdot P_{i-1}(X_{i-1}, \dots X_{n-1})$ ($i=2, \dots n$), where the polynomials $P_{i-1}$ are of degree $\le k_j-2$ in $X_j$ ($j= i-1, \dots, n-1$) and independent of $w$ . By induction, each product 
$F_{i-1} \cdot P_{i-1}(X_{i-1}, \dots X_{n-1})$ belongs to 
$$M^{(i-1)}(\mathfrak O) \otimes P_{k_i-1}[X_i] \otimes \dots P_{k_{n-1}-2}[X_{n-1}] \subset
M^{(n-1)} (\mathfrak O) \otimes  P_{k_{n-1}-2}[X_{n-1}].$$ 
This completes the proof of the statement.
\end{proof} 
Upon tensoring with a space of cusp forms, we obtain:
\begin{cor} Let $k_0, \dots ,k_{n-1} \in 2 \mathbb N.$ Suppose that, for $i=1, \dots, n-1,$ $f_i$ is a weight $k_i$ cusp form for SL$_2(\mathbb Z)$ and that $f_0$ is a cusp form of weight $2-k_0$. Let $F_n 
\in \mathcal O [X_1, \dots, X_{n-1}]$ 
be defined by
$$f_0(w)\int_{i \infty}^w f_1(w_1)(w_1-X_1)^{k_1-2} \int_{i \infty}^{w_1} f_2(w_2)(w_2-X_2)^{k_2-2}  \dots 
dw_{n-1} \dots dw_1$$
Then $F_n \in M^{(n)}_c(\mathfrak O_c)$. 
\end{cor}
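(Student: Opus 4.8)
The plan is to reduce the Corollary to Lemma~\ref{Eichl} by recognising the integrand as a twist of the iterated Eichler integral already treated there. Write $\tilde F_n \in \mathcal O[X_1, \dots, X_{n-1}]$ for the function of Lemma~\ref{Eichl} built from $f_1, \dots, f_{n-1}$ (i.e. the case $k_0 = 2$, so that the zeroth slot $V_0=\mathcal O$ carries weight $0$). The function $F_n$ of the Corollary is then exactly $f_0 \cdot \tilde F_n$, where $f_0$ carries no $X_i$-dependence. So the whole task is to understand how multiplication by $f_0$ interacts with the two representation sequences $\mathfrak O = (\mathcal O, P_{k_1-2}, \dots, P_{k_{n-1}-2})$ and $\mathfrak O_c = (\mathcal O_c, P_{k_1-2}, \dots, P_{k_{n-1}-2})$, which differ only in the zeroth slot (weight $0$ versus weight $2-k_0$).

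First I would introduce the multiplication map $\mu_{f_0}\colon \mathcal O[X_1, \dots, X_{n-1}] \to \mathcal O_c[X_1, \dots, X_{n-1}]$, $g \mapsto f_0 g$, acting only on the $z$-variable (the $V_0$-slot). The crucial algebraic identity is that, under the explicit action formula \eqref{.}, the extra factor $j(\g, z)^{k_0 - 2}$ appearing for $\mathfrak O_c$ (relative to the weight-$0$ action on $\mathfrak O$) is cancelled precisely by the transformation law $f_0(\g z) = j(\g, z)^{2 - k_0} f_0(z)$ of the weight-$(2-k_0)$ form $f_0$. This shows $\mu_{f_0}$ is $\G$-equivariant from the $\mathfrak O$-action to the $\mathfrak O_c$-action, and that its image lies in $\mathcal O_c$ because $f_0$ decays exponentially while every $g \in M^{(m)}(\mathfrak O) \subset \mathcal O[X_1,\dots,X_{n-1}]$ grows at most polynomially.

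The heart of the argument is the claim that $\mu_{f_0}$ preserves the filtration, namely $\mu_{f_0}(M^{(m)}(\mathfrak O)) \subseteq M^{(m)}(\mathfrak O_c)$ for all $m$, which I would prove by induction on $m$. I would use the characterisation (immediate from the definition together with Prop.~\ref{stru}(i)) that $G \in M^{(m)}$ if and only if $G.(\g - 1) \in M^{(m-1)} \otimes V_{m-1}$ for every $\g \in \G$. For the base case, $\mu_{f_0}(M^{(1)}(\mathfrak O)) = \mu_{f_0}(\mathbb C) = \mathbb C f_0 \subseteq S_{2-k_0}(\G) = M^{(1)}(\mathfrak O_c)$. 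For the inductive step, given $G \in M^{(m)}(\mathfrak O)$ and $\g \in \G$, equivariance gives $\mu_{f_0}(G).(\g - 1) = \mu_{f_0}(G.(\g-1))$; since $G.(\g-1) \in M^{(m-1)}(\mathfrak O) \otimes V_{m-1}$ and $\mu_{f_0}$ touches only the zeroth factor, the induction hypothesis places this element in $M^{(m-1)}(\mathfrak O_c) \otimes V_{m-1}$, whence $\mu_{f_0}(G) \in M^{(m)}(\mathfrak O_c)$.

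Finally I would combine these ingredients: by Lemma~\ref{Eichl}, $\tilde F_n \in M^{(n)}_c(\mathfrak O)$, so $F_n = \mu_{f_0}(\tilde F_n) \in M^{(n)}(\mathfrak O_c)$ by the filtration-preservation claim, while parabolic invariance is inherited through equivariance, $F_n.(\pi - 1) = \mu_{f_0}(\tilde F_n.(\pi-1)) = 0$ for every parabolic $\pi$. Hence $F_n \in M^{(n)}_c(\mathfrak O_c)$. The main obstacle is the filtration-preservation claim: one must verify that the purely $z$-slot twist by $f_0$ is compatible with the recursively defined iterated-invariant spaces, and the cleanest way to handle this compatibility is through the cocycle-type characterisation $G.(\g-1) \in M^{(m-1)} \otimes V_{m-1}$ rather than through the quotient definition of $M^{(m)}$ directly.
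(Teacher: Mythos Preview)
Your argument is correct and is exactly a careful unpacking of what the paper means by its single introductory phrase ``Upon tensoring with a space of cusp forms, we obtain:'', which is the entire proof the paper offers for this corollary. Your map $\mu_{f_0}$ is precisely that tensoring operation, and the equivariance check together with the inductive filtration-preservation claim make explicit why Lemma~\ref{Eichl} transfers from $\mathfrak O$ (with $k_0=2$) to $\mathfrak O_c$ (with general $k_0$).
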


\subsection{The space of real-analytic iterated integrals}
\label{HOFS1}
We now associate iterated invariants to real analytic modular forms. This will lead to a way to obtain invariant objects from iterated integrals associated with modular forms. 

Let $V=(V_i, \rho_i)_{i \ge 0}$ where $V_0=\mathcal R$ (resp. $V_0=\mathcal R_c$ ) with $\G=$SL$_2(\mathbb Z)$ acting though $\underset{r, s}{|}$, and $V_i=P_{k_i-2}[X_i]$ $(i \ge 1)$ with the usual action $\underset{2-k_i, 0}{|}$ on polynomials. Then we have that, for $i>0$,
\begin{align}
&V_i^c=H^0(\G, V_i)=H^0(\G, P_{k_i-2})=0 \qquad \text{if $k_i>2$ (by translation invariance), and} 
\label{coho:1} 
\\
&H^1_c(\G, V_i) \cong S_{k_i} \oplus \bar S_{k_i} \quad \text{(by Eichler-Shimura combined with Lem. 1 of VI \S 5 of \cite{La})} 
\label{coho:2}
\end{align}
Notice that although $H^0(\G, \mathcal R)$ is very similar to $\mathcal M_{r, s}$ they are not the same, because the former includes functions that do not have a Fourier expansion of the form \eqref{FE}.

We also consider the one-dimensional subspace $\tilde{\mathcal M}_{r, s}$ of $\mathcal M_{r, s} \subset H^0(\G, \mathcal R)$ generated by $\mathcal E_{r, s}$. 
With the map $\bar \psi$ defined in Cor. \ref{full}, we set 
$$\tilde M_c^{(2)}(\mathcal R):=
\bar \psi^{-1}( \tilde{\mathcal  M}_{r, s} \otimes H^1_c(\Gamma, V_1)) $$ 
Explicit examples of elements of $\tilde M_c^{(2)}(\mathcal R)$ are certain real-analytic analogues of iterated Eichler integrals in the case of $\mathcal E_{r, s}.$ We will make this more specific in the case $n=2$ with the following corollary of Lemma \ref{Eichl}:
\begin{cor}\label{EichlErs} Suppose that $k_1 \in 2 \mathbb N$ and that $f_1$ is a weight $k_1$ cusp form for SL$_2(\mathbb Z)$. Let $F_2 
\in \tilde{\mathcal M}_{r, s} [X_1]$ be defined by
$$F_2(w, X_1):=\mathcal E_{r, s}(w)\int_{i \infty}^w f_1(w_1)(w_1-X_1)^{k_1-2} dw_1.$$
Then $F_2 \in \tilde M_c^{(2)}(\mathfrak R).$
\end{cor}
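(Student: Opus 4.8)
The plan is to verify that $F_2$ satisfies the defining conditions of $\tilde M_c^{(2)}(\mathfrak R)=\bar\psi^{-1}(\tilde{\mathcal M}_{r,s}\otimes H^1_c(\Gamma,V_1))$. This amounts to two things: first, that $F_2$ lies in $M_c^{(2)}$ for the sequence $V$ with $V_0=\mathcal R$ and $V_1=P_{k_1-2}[X_1]$; and second, that its image under $\bar\psi$ lands in the prescribed subspace $\tilde{\mathcal M}_{r,s}\otimes H^1_c(\Gamma,V_1)$ rather than in the full $H^0(\Gamma,V_0)\otimes H^1_c(\Gamma,V_1)$.

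\medskip

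\textbf{Step 1 (membership in $M_c^{(2)}$).} By Proposition \ref{stru}(iii), I need to check that $F_2.(\g-1)\in M^{(1)}\otimes V_1$ for all $\g\in\G$ and that $F_2.\pi=F_2$ for every parabolic $\pi$. The computation is the real-analytic analogue of the $n=2$ case of Lemma \ref{Eichl}: applying the action \eqref{.} (now with $V_0=\mathcal R$ acting through $\underset{r,s}{|}$), together with the change of variables $w_1\to\g w_1$, the transformation law of $f_1$, and the identity \eqref{gzgX}, one finds that the inner Eichler integral transforms exactly as in Lemma \ref{Eichl}, while the outer factor $\mathcal E_{r,s}$ is $\underset{r,s}{|}$-invariant. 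Concretely,
$$
F_2.(\g-1)=\mathcal E_{r,s}(w)\int_{\g^{-1}i\infty}^{i\infty} f_1(w_1)(w_1-X_1)^{k_1-2}\,dw_1
=\mathcal E_{r,s}\otimes r_{f_1}(\g;X_1),
$$
where the integral is the period polynomial $r_{f_1}(\g;X_1)\in P_{k_1-2}[X_1]=V_1$. Since $\mathcal E_{r,s}\in H^0(\G,\mathcal R)=M^{(1)}$, this lies in $M^{(1)}\otimes V_1$. The same identity with $\pi=T$ (so that $\pi^{-1}i\infty=i\infty$) shows $F_2.(T-1)=0$, giving the parabolic invariance. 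Hence $F_2\in M_c^{(2)}(\mathfrak R)$.

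\medskip

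\textbf{Step 2 (landing in the right subspace).} Here I trace through the definition of $\bar\psi$ from Corollary \ref{full} and Lemma \ref{n=2}. With respect to a basis of $M^{(1)}=H^0(\G,\mathcal R)$, the map $\psi$ records the cocycle $\g\mapsto F_2.(\g-1)$. From Step 1 this cocycle is $\mathcal E_{r,s}\otimes\bigl(\g\mapsto r_{f_1}(\g;X_1)\bigr)$, and $\g\mapsto r_{f_1}(\g;X_1)$ is the classical Eichler cocycle of $f_1$, an element of $Z^1_c(\G,V_1)$. Thus $\psi(F_2)=\mathcal E_{r,s}\otimes[\text{period cocycle of }f_1]$, which manifestly sits in the one-dimensional piece $\tilde{\mathcal M}_{r,s}\otimes Z^1_c(\G,V_1)$ generated by $\mathcal E_{r,s}$. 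Passing to cohomology via the projection $Z^1_c\to H^1_c$ keeps the $\mathcal E_{r,s}$-factor, so $\bar\psi(F_2)\in\tilde{\mathcal M}_{r,s}\otimes H^1_c(\G,V_1)$, which is exactly the condition defining $\tilde M_c^{(2)}(\mathfrak R)$. Therefore $F_2\in\tilde M_c^{(2)}(\mathfrak R)$.

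\medskip

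\textbf{The main obstacle} is bookkeeping rather than depth: one must be careful that the outer factor $\mathcal E_{r,s}$ is genuinely $\underset{r,s}{|}$-invariant and \emph{factors out} cleanly through the action \eqref{.}, so that the tensor decomposition $F_2.(\g-1)=\mathcal E_{r,s}\otimes r_{f_1}(\g;\cdot)$ holds with the $\mathcal E_{r,s}$ living in the $V_0$-slot and the period polynomial in the $V_1$-slot. Because the action on $V_0=\mathcal R$ is $\underset{r,s}{|}$ and not the holomorphic slash used in Lemma \ref{Eichl}, I should confirm that invariance of $\mathcal E_{r,s}$ under $\underset{r,s}{|}$ is what makes the outer factor survive the change of variables unchanged. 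Once this is checked, everything else is a direct transcription of the $n=2$ argument of Lemma \ref{Eichl} together with unwinding the definitions of $\psi$ and $\bar\psi$.
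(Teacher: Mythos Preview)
Your proof is correct and follows exactly the same approach as the paper: the paper's proof consists of the single identity $F_2.(\gamma-1)=\mathcal E_{r,s}\,r_{f_1}(\gamma;X_1)$ together with the phrase ``and the definition of $\bar\psi$'', and your Steps~1 and~2 are precisely the unpacking of these two ingredients. Your additional check of $T$-invariance mirrors the $n=2$ case of Lemma~\ref{Eichl}, and your unwinding of $\bar\psi$ simply makes explicit what the paper leaves implicit.
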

\begin{proof} The corollary follows immediately from the identity
$F_2.(\g-1)= \mathcal E_{r, s} r_f(\gamma; X_1)$ and the definition of $\bar \psi$.
\end{proof}
Since the functions of this corollary have been the prototypes for the elements of $\tilde M_c^{(2)}(\mathfrak R)$, we refer to it as the space of \emph{real-analytic iterated integrals}.

Now, with the definition of $\tilde M_c^{(2)}(\mathcal R)$ and \eqref{coho:1}, \eqref{coho:2},
Cor. \ref{full} becomes
$$0 \longrightarrow H^0(\Gamma, \mathcal R \otimes P_{k_1-2}[X_1])
 \overset{\bar \iota}{\longrightarrow} \tilde M_c^{(2)}(\mathcal R)
\overset{\bar \psi}{\longrightarrow}  
\tilde{\mathcal M}_{r, s} \otimes  (S_{k_1} \oplus \bar S_{k_1}) $$
We will show that this can be completed to a right exact sequence. 
\begin{thm}\label{1stclassif} Suppose that $r+s>k_1$. The sequence of maps 
$$0 \longrightarrow H^0(\Gamma, \mathcal R \otimes P_{k_1-2}[X_1])
 \overset{\bar \iota}{\longrightarrow} \tilde M_c^{(2)}(\mathcal R)
\overset{\bar \psi}{\longrightarrow}  
\tilde{\mathcal M}_{r, s} \otimes  (S_{k_1} \oplus \bar S_{k_1}) \longrightarrow 0$$
is exact.
\end{thm}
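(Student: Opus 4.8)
The plan is to establish exactness of the sequence, which by the already-derived left-exact portion (from Cor.~\ref{full} combined with \eqref{coho:1} and \eqref{coho:2}) reduces to a single assertion: the map $\bar\psi$ is surjective onto $\tilde{\mathcal M}_{r,s}\otimes(S_{k_1}\oplus\bar S_{k_1})$. In other words, I must produce, for every cusp form generator of $S_{k_1}\oplus\bar S_{k_1}$, an element of $\tilde M_c^{(2)}(\mathcal R)$ whose image under $\bar\psi$ is the prescribed tensor. Since $\tilde{\mathcal M}_{r,s}$ is the one-dimensional space spanned by $\mathcal E_{r,s}$, it suffices to hit $\mathcal E_{r,s}\otimes [f_1]$ and $\mathcal E_{r,s}\otimes[\bar f_1]$ for each cusp form $f_1\in S_{k_1}$, where $[f_1]$ denotes the corresponding class in $H^1_c(\G,V_1)$.

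The natural candidates are exactly the functions already constructed. First I would invoke Cor.~\ref{EichlErs}: the real-analytic iterated Eichler integral $F_2(w,X_1)=\mathcal E_{r,s}(w)\int_{i\infty}^w f_1(w_1)(w_1-X_1)^{k_1-2}dw_1$ lies in $\tilde M_c^{(2)}(\mathfrak R)$, and its image under $\bar\psi$ is the class of $\gamma\mapsto \mathcal E_{r,s}\,r_f(\gamma;X_1)$, i.e.\ $\mathcal E_{r,s}\otimes[f_1]$ (using that the Eichler--Shimura isomorphism sends $f_1$ to the period cocycle $r_f$). Taking the conjugate construction $F_2^-$ with $r_f^-$ yields $\mathcal E_{r,s}\otimes[\bar f_1]$. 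Since $f_1$ was an arbitrary cusp form and these classes span $\tilde{\mathcal M}_{r,s}\otimes(S_{k_1}\oplus\bar S_{k_1})$ under the Eichler--Shimura identification \eqref{coho:2}, the span of the images of $\bar\psi$ exhausts the whole target, giving surjectivity.

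The hypothesis $r+s>k_1$ is where I expect the real content to lie, and it enters precisely through convergence. The functions $F_2^{\pm}$ of Cor.~\ref{EichlErs} are defined on a fundamental-domain representative, but to assert they represent genuine elements of $\tilde M_c^{(2)}(\mathcal R)\subset H^0$-data, I must confirm that the associated iterated-invariant data is well-defined; more importantly, the Poincar\'e-type averaging implicit in realizing these as honest objects in $\mathcal M$ requires the convergence established in Prop.~\ref{phi}, which holds exactly under $r+s>k$ (here $k=k_1$). The cleanest route is to connect $F_2^{\pm}$ directly to the series $\phi^{\pm}_{r,s}(f;z,X)$ of \eqref{defphi}: by Prop.~\ref{phi} these converge absolutely for $r+s>k_1$ and are $\underset{r,s,2-k_1}{|}$-invariant, and by Prop.~\ref{Mrs} their coefficients lie in $\mathcal M$. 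Thus the elements whose existence proves surjectivity are literally the $\phi^{\pm}_{r,s}$, and the constraint $r+s>k_1$ is what guarantees they are legitimately defined.

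The main obstacle I anticipate is verifying that $\bar\psi$ applied to $\phi^{\pm}_{r,s}$ (or to the representative $F_2^{\pm}$) genuinely lands on the full cocycle class rather than a proper subclass, i.e.\ that no cancellation or coboundary-collapse occurs that would shrink the image. Concretely, I must check that the cohomology class of $\gamma\mapsto \mathcal E_{r,s}\,r_f^{\pm}(\gamma;X_1)$ in $H^1_c(\G,V_1)$ is nonzero and recovers $f_1$ (resp.\ $\bar f_1$) under Eichler--Shimura. This is essentially the injectivity of the period map on cusp forms, which is classical, but care is needed because the cocycle is twisted by the factor $\mathcal E_{r,s}$; I would argue that since $\mathcal E_{r,s}$ is a fixed nonzero $\underset{r,s}{|}$-invariant function, tensoring by it is an isomorphism on the relevant $H^1_c$ slot, so the period cocycle's class is preserved up to this nonzero scalar function. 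Once this is secured, surjectivity, and hence exactness, follows.
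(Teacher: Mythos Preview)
Your first instinct---to use $F_2^{\pm}=\mathcal E_{r,s}F^{\pm}_{f_1}$ from Cor.~\ref{EichlErs} as the preimage---is correct and is essentially what the paper does. The paper's explicit preimage is $\psi^{\pm}_{h;r,s}:=\phi^{\pm}_{r,s}(h;-)-F^{\pm}_h\mathcal E_{r,s}$, which differs from $-F_2^{\pm}$ only by the invariant element $\phi^{\pm}_{r,s}$; since the latter is in $\ker\bar\psi$, both map to the same class, and the surjectivity argument goes through as you describe. The anticipated ``obstacle'' is a non-issue: the Eichler--Shimura map is an isomorphism onto $H^1_c$, so the period cocycle of a nonzero cusp form is never cohomologically trivial, and tensoring with the fixed nonzero vector $\mathcal E_{r,s}$ of the one-dimensional $\tilde{\mathcal M}_{r,s}$ changes nothing.

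Where your proposal goes wrong is the paragraph asserting that ``the elements whose existence proves surjectivity are literally the $\phi^{\pm}_{r,s}$.'' This is false: by Prop.~\ref{phi}, $\phi^{\pm}_{r,s}$ is $\underset{r,s,2-k_1}{|}$-\emph{invariant}, so it lies in $H^0(\Gamma,\mathcal R\otimes P_{k_1-2}[X_1])=\ker\bar\psi$ and maps to zero, not to $\mathcal E_{r,s}\otimes[f_1]$. You have conflated the invariant average $\phi^{\pm}_{r,s}$ with the non-invariant $F_2^{\pm}$ (or with the paper's $\psi^{\pm}_{h;r,s}$, their difference).

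This same conflation underlies your account of the hypothesis $r+s>k_1$. If you run the argument with $F_2^{\pm}=\mathcal E_{r,s}F^{\pm}_{f_1}$, there is no further Poincar\'e averaging to perform: $F^{\pm}_{f_1}$ is already globally defined and bounded (since $f_1$ is cuspidal), so $F_2^{\pm}\in\mathcal R\otimes P_{k_1-2}$ as soon as $\mathcal E_{r,s}$ converges, i.e.\ for $r+s>2$. The condition $r+s>k_1$ is genuinely needed only in the paper's version, because the preimage $\psi^{\pm}_{h;r,s}$ is built from $\phi^{\pm}_{r,s}$, whose absolute convergence (Prop.~\ref{phi}) is what requires $r+s>k_1$. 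So your approach, once the $\phi^{\pm}_{r,s}$ confusion is removed, in fact establishes the surjectivity under a weaker hypothesis than stated.
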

\begin{proof} The only part remaining to be proved is the surjectivity of $\bar \psi$. Let $\mathcal E_{r, s} \otimes (f, \bar g)$ be an arbitrary basis element of $\tilde{\mathcal M}_{r, s} \otimes  (S_{k_1} \oplus \bar S_{k_1})$. With the notation of sub-section \ref{explsubc}, assign to each $h \in S_{k_1}$ a function $\psi^{\pm}_{h; r, s}$ given by 
$$\psi_{h; r, s}^{\pm}(z, X):=\phi^{\pm}_{r, s}(h; z, X)- F^{\pm}_h(z, X) \mathcal E_{r, s}=\sum_{\g \in B \backslash \G} \frac{
r^{\pm}_h(\g; X)}{j(\gamma, z)^{r} j(\gamma, \bar z)^{s}}.$$
By Prop. \ref{phi}, this is absolutely convergent and its coefficients are of polynomial growth at infinity, and thus, they belong to $\mathcal R.$
 
The image of $\psi^+_{f; r, s}+\psi^-_{g; r, s}$ under $\psi$ is induced by the mapping 
\begin{align}\label{psiDef}
\g \to (\psi^+_{f; r, s}+\psi^-_{g; r, s}) \underset{r, s, 2-k}{|}(\g-1)&=-(F^+_f+F^-_g)|_{0, 0; 2-k} \mathcal E_{r, s}(z)
\nonumber
\\
&=-(r_f(\g; X)+\overline{r_g(\g; X)} )\mathcal E_{r, s}(z).
\end{align}
For the two equalities we have used Prop. \ref{phi} and \eqref{decomp:1}.
By the explicit formula for the Eichler-Shimura map we deduce that $\bar \psi(-\psi^+_{r, s}-\psi^-_{r, s})=\mathcal E_{r, s} \otimes (f, \bar g).$ This shows that $-\psi^+_{r, s}-\psi^-_{r, s} \in \tilde{ M}^{(2)}_c(\mathcal R)$ and that its image is the element  $\mathcal E_{r, s} \otimes (f, \bar g)$.
\end{proof}

{\bf Remark.} The theorem could be stated in more general form so that the real-analytic analogues of both Eisenstein and Poincare series are captured. That would have the advantage of accounting for the full space $M_c^{(2)}$ instead of $\tilde M_c^{(2)}$, but we would need to enlarge our investigations to objects that do not satisfy \eqref{FE}. This is because the ``real-analytic Poincare series" do not satisfy \eqref{FE}. However, they are are clearly interesting objects, worthwhile studying, which are the subject of work in progress with F. Str\"omberg.

The family $\{\psi_{h; r, s}^{\pm}\}$ constructed in Th. \ref{1stclassif} allows us to describe our approach to Question 2 of the Introduction. Specifically, for $h \in S_{k_1}$, set
$$\psi_{h; r, s}^{\pm}(z, X):=\sum_{\g \in B \backslash \G} \frac{
r^{\pm}_h(\g; X)}{j(\gamma, z)^{r} j(\gamma, \bar z)^{s}}.$$
The family addresses Question 2, inasmuch as it satisfies the following three properties:

Firstly, by Th. \ref{1stclassif}, $\psi_{h; r, s}^{\pm}$ belong to the space $\tilde{ M}^{(2)}_c(\mathcal R)$ of real-analytic iterated integrals.

Secondly, this family is ``canonical" in the sense that it induces a generating set for $\tilde{\mathcal M}_{r, s} \otimes  (S_{k_1} \oplus \bar S_{k_1}).$

Thirdly, it is possible to obtain, by a simple process, explicit $\G$-equivariant versions of the real-analytic iterated integrals $\psi_{h; r, s}^{\pm}$.  
 This process is given in the following proposition which also formalises a link between the two main themes of this note, namely second-order modular forms and iterated integrals:
\begin{prop} Let $r+s>k_1.$ There is a well-defined linear map from the subspace of $\mathcal M_c^{(2)}(\mathcal R)$ generated by the family $\{\psi_{h; r, s}^{\pm}\}$ to $\oplus_{i=0}^{k_1-2} \MI_2'$.
\end{prop}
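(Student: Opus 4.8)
The plan is to send each generator $\psi^{\pm}_{h; r, s}$ to the tuple of coefficients of its natural $\G$-equivariant completion, and to show that this assignment respects all linear relations. Recall from the proof of Theorem \ref{1stclassif} that $\psi^{\pm}_{h; r, s}=\phi^{\pm}_{r,s}(h; z, X)-F^{\pm}_h(z,X)\mathcal E_{r,s}(z)$, so the $ \underset{r,s,2-k}{|}$-invariant series $\phi^{\pm}_{r,s}(h; z, X)$ of Prop. \ref{phi} is recovered from $\psi^{\pm}_{h;r,s}$ by adding back $F^{\pm}_h\mathcal E_{r,s}$. Expanding $\phi^{\pm}_{r,s}(h;z,X)$ in the basis $(X-z)^i(X-\bar z)^{k_1-2-i}$ as in \eqref{expansion} produces the coefficients $\phi^{\pm}_{r,s}(h;i,-)$, and I would define the map by $\psi^{\pm}_{h;r,s}\mapsto(\phi^{\pm}_{r,s}(h;i,-))_{i=0}^{k_1-2}$, extended $\C$-linearly. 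The target is correct: by Prop. \ref{Mrs} each coefficient lies in $\mathcal M$, and since $r+s>k_1$ is even and $k_1\ge 12$, Theorem \ref{mathcalA} shows each $\phi^{\pm}_{r,s}(h;i,-)$ lies in $\mathcal A\subseteq\MI'_2$.

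The substance of the proof is well-definedness, i.e. that a linear relation $\sum_\alpha c_\alpha\psi^{\epsilon_\alpha}_{h_\alpha;r_\alpha,s_\alpha}=0$ forces $\sum_\alpha c_\alpha\phi^{\epsilon_\alpha}_{r_\alpha,s_\alpha}(h_\alpha)=0$, whence all coefficient-tuples agree. First I would use that $\mathcal M_c^{(2)}(\mathcal R)$ is graded by the weight $(r,s)$ of the $ \underset{r,s,2-k}{|}$-action (the polynomial degree being the fixed value $k_1-2$), so the relation splits into relations with $(r,s)$ fixed. Grouping the $+$ and $-$ terms and using that $\psi^+_{h;r,s}$ is linear and $\psi^-_{h;r,s}$ conjugate-linear in $h$, each homogeneous relation can be written $\psi^+_{f;r,s}+\psi^-_{g;r,s}=0$ for suitable $f,g\in S_{k_1}$. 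Setting $P:=\phi^+_{r,s}(f)+\phi^-_{r,s}(g)$, the relation gives $P=(F^+_f+F^-_g)\mathcal E_{r,s}$, while Prop. \ref{phi} guarantees that $P$ is $ \underset{r,s,2-k}{|}$-invariant.

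The key step is then to convert the invariance of $P$ into a statement about periods. Since $\mathcal E_{r,s}$ is $ \underset{r,s}{|}$-invariant and not identically zero, and the automorphy factors multiply across the tensor action, invariance of $P=\mathcal E_{r,s}\cdot(F^+_f+F^-_g)$ forces $H:=F^+_f+F^-_g$ to be invariant under $ \underset{0,0,2-k}{|}$. By \eqref{decomp:1} and its conjugate, $H \underset{0,0,2-k}{|}(\g-1)=r_f(\g;X)+\overline{r_g(\g;X)}$, so this period cocycle vanishes identically; being cuspidal (it kills the parabolic $T$), its class lies in $H^1_c(\G,P_{k_1-2})$, and the injectivity of the Eichler--Shimura isomorphism \eqref{coho:2} yields $f=g=0$. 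Hence $H=0$, $P=0$, and the coefficient-tuples of the two sides of the relation coincide, which is exactly well-definedness; linearity is then immediate from the construction.

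The main obstacle, and the only non-formal point, is this last passage: extracting genuine period information from the mere invariance of the completed series and then invoking Eichler--Shimura. The delicate parts are to justify that invariance of the product $\mathcal E_{r,s}\,H$ descends to invariance of the Eichler-integral factor $H$ (using $\mathcal E_{r,s}\not\equiv 0$ and the multiplicativity of the automorphy factor in the tensor action), and to keep the holomorphic ($+$) and anti-holomorphic ($-$) contributions correctly separated so that the combined map $S_{k_1}\oplus\overline{S_{k_1}}\to H^1_c(\G,P_{k_1-2})$ can be applied. Everything else---the identification of the completion, membership of the coefficients in $\MI'_2$, and linearity---follows directly from Propositions \ref{phi}, \ref{Mrs} and Theorem \ref{mathcalA}.
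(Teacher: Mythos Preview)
Your construction of the map---complete $\psi^{\pm}_{h;r,s}$ to the invariant series $\phi^{\pm}_{r,s}(h;-,-)$ by adding back $F^{\pm}_h\mathcal E_{r,s}$, then read off the coefficients in the basis $(X-z)^i(X-\bar z)^{k_1-2-i}$ and invoke Theorem~\ref{mathcalA}---is exactly what the paper does.

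The difference is that the paper's proof stops there: it simply writes down the assignment on generators and declares it a map, without checking that linear relations among the $\psi^{\pm}_{h;r,s}$ are respected. You supply this missing step, reducing (via the $(r,s)$-grading and the linearity/conjugate-linearity in $h$) to a relation $\psi^+_{f;r,s}+\psi^-_{g;r,s}=0$, then using invariance of $\phi^{+}_{r,s}(f)+\phi^-_{r,s}(g)$ and non-vanishing of $\mathcal E_{r,s}$ to force the period cocycle $r_f+\overline{r_g}$ to be identically zero, and finally appealing to the injectivity of the Eichler--Shimura map \eqref{coho:2} to get $f=g=0$. This argument is correct and is a genuine strengthening of the paper's presentation; the paper appears to take well-definedness for granted (or to be content with a map defined on the free span), whereas your Eichler--Shimura step actually justifies the word ``well-defined'' in the statement.
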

\begin{proof} For each $\psi_{h; r, s}^{\pm}(z, X)$ consider 
$$\phi_{r, s}^{\pm}(h; z, X)=\psi_{h; r, s}^{\pm}(z, X)+ F^{\pm}_h(z, X) \mathcal E_{r, s}.
$$ By Th. \ref{mathcalA}, the coefficients $\phi^{\pm}_{r, s}(h; i, z, X)$ of $(X-z)^i(X-\bar z)^{k_1-2-i}$ in  $\phi_{r, s}^{\pm}(h; z, X)$ belongs to $\MI_2$. Therefore, the assignment 
$$\psi_{h; r, s}^{\pm}(z, X) \to (\phi^{\pm}_{r, s}(h; 0, z, X), \dots, \phi^{\pm}_{r, s}(h; k-2, z, X))$$
defines the sought map.  
\end{proof}

\subsubsection{A classification of holomorphic second-order forms}
We close with another implication of Cor. \ref{full} to \emph{holomorphic} extended second-order modular forms. The resulting theorem is the analogue of Th. 2.3 of \cite{CDO} to the class of extended second-order modular forms.

Let $\rho$ be a complex representation of $\G$ of dimension $k_1-1$ induced by 
$$\rho(-I_2)=I_{k_1-1}, \, \, \rho(S)=\left ( (-1)^i \delta_{j, k_1-2-i}\right )_{i, j=0}^{k_1-2} \qquad \text{and} \, \, 
\rho(T)=\left ( (-1)^{i+j} \binom{j}{i} \right )_{i, j=0}^{k_1-2}.
$$
For an even $k>0$, we denote by $M_k(\rho)$ the space of vector-valued modular forms for the representation $\rho.$
\begin{prop} Suppose that $k>k_1>2.$The sequence of maps
$$0 \longrightarrow M_k(\rho)
 \longrightarrow M_c^{(2)}(\mathfrak O) \overset{\bar \psi}{\longrightarrow}  
M_k \otimes  (S_{k_1} \oplus \bar S_{k_1}) \longrightarrow 0$$
is exact. In particular,
$$\emph{dim} M_c^{(2)}(\mathfrak O)=2\emph{dim}(M_k)\emph{dim}(S_{k_1})+ \frac{5+k}{12}(k_1-1)+\frac{i^{k+k_1-2}}{4}-\frac13 \legendre{k_1-1}{3} \legendre{k-1}{3}$$
\end{prop}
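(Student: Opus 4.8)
The plan is to establish exactness of the sequence
$$0 \longrightarrow M_k(\rho) \longrightarrow M_c^{(2)}(\mathfrak O) \overset{\bar \psi}{\longrightarrow} M_k \otimes (S_{k_1} \oplus \bar S_{k_1}) \longrightarrow 0,$$
and then extract the dimension formula by additivity. First I would instantiate Cor. \ref{full} with the holomorphic data $V_0 = \mathcal O$ (so $M^{(1)}(\mathfrak O) = M_k$ by \eqref{class} with $k = 2 - k_0$) and $V_1 = P_{k_1 - 2}[X_1]$. The key input identifications are, as in the real-analytic case of Th. \ref{1stclassif}, $H^1_c(\G, V_1) \cong S_{k_1} \oplus \bar S_{k_1}$ by Eichler-Shimura, and $V_1^c = H^0(\G, P_{k_1-2}) = 0$ since $k_1 > 2$. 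With these the corollary yields left-exactness and identifies the kernel: I would argue that $H^0(\G, \mathcal O \otimes P_{k_1-2}[X_1])/(M_k \otimes H^0(\G, P_{k_1-2}))$ reduces to $H^0(\G, \mathcal O \otimes P_{k_1-2})$, which is precisely the space $M_k(\rho)$ of vector-valued modular forms for the representation $\rho$ that dualizes the symmetric-power action on $P_{k_1-2}$ — this is where the explicit matrices for $\rho(S), \rho(T)$ come from. So the exactness of everything except the rightmost surjectivity is essentially a translation of Cor. \ref{full}.

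The main work, and the main obstacle, is the surjectivity of $\bar \psi$, exactly parallel to the proof of Th. \ref{1stclassif}. For a basis element $g \otimes (f, \bar g')$ of $M_k \otimes (S_{k_1} \oplus \bar S_{k_1})$, I would construct a holomorphic second-order preimage by a Poincaré-style averaging of the relevant Eichler integral against the weight-$k$ datum, mimicking the construction $\psi^{\pm}_{h; r, s}$ but now with the holomorphic modular form $g \in M_k$ replacing $\mathcal E_{r, s}$. Concretely I would form $\sum_{\g \in B \backslash \G} r^{\pm}_h(\g; X) \cdot (g|\g)$-type series and check via the cocycle computation \eqref{psiDef} that its image under $\bar \psi$ is the prescribed element; the hypothesis $k > k_1 > 2$ should guarantee absolute convergence in the same manner that $r + s > k_1$ did in Prop. \ref{phi}, and it also ensures $g$ is genuinely a weight-$k$ form (not merely formal). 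The delicate point is confirming that this preimage lands in $M_c^{(2)}(\mathfrak O)$ rather than merely in some larger invariant space: I must verify both the order-two transformation law $F.(\g - 1) \in M^{(1)} \otimes V_1$ via Prop. \ref{stru}(iii) and the parabolic (cuspidality-at-$\infty$) condition $F.\pi = F$, which follows because $r^{\pm}_h(\pi; X) = 0$ for $\pi = T$.

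Having the short exact sequence, the dimension identity is additive: $\dim M_c^{(2)}(\mathfrak O) = \dim M_k(\rho) + 2\,\dim(M_k)\,\dim(S_{k_1})$, using $\dim(S_{k_1} \oplus \bar S_{k_1}) = 2\dim S_{k_1}$. The factor $\dim M_k$ appears because surjectivity produces a preimage for each of the $\dim M_k$ choices of the weight-$k$ factor tensored with each Hecke-eigen datum. It then remains to compute $\dim M_k(\rho)$ explicitly and show it equals
$$\frac{5 + k}{12}(k_1 - 1) + \frac{i^{k + k_1 - 2}}{4} - \frac13 \legendre{k_1 - 1}{3}\legendre{k - 1}{3}.$$
This I would obtain from the standard dimension formula for vector-valued modular forms for $\G = \mathrm{SL}_2(\ZZ)$ in terms of the eigenvalues of $\rho(S)$ and $\rho(R)$ (with $R = ST$), reading off the elliptic contributions from the explicit matrices: the leading term $(5+k)(k_1-1)/12$ is the degree-times-rank bulk term, the quarter-integer term encodes the order-two (the $S$-fixed-point) correction through $i^{k+k_1-2}$, and the Kronecker-symbol product is the order-three ($R$-fixed-point) correction. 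I expect the surjectivity argument to be the substantive obstacle, while the dimension count is a bookkeeping exercise once the eigenvalue data of $\rho$ is diagonalized.
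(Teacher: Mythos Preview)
Your overall strategy matches the paper's: instantiate Cor.~\ref{full} with $V=\mathfrak O$, identify the kernel with $M_k(\rho)$ via the map $\sum f_j(z)X^j\mapsto (f_0,\dots,f_{k_1-2})^T$, prove surjectivity of $\bar\psi$ by explicit construction, and then read off $\dim M_k(\rho)$ from a vector-valued dimension formula (the paper uses Candelori--Franc, Th.~6.3 and Rem.~6.4, computing $\mathrm{Tr}\,\rho(ST)$ via a recurrence and analysing the Jordan structure of $\rho(T)$). The left-exactness and the dimension bookkeeping are exactly as you describe.

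The one place where your sketch has a genuine gap is the surjectivity step. You propose to form ``$\sum_{\g \in B \backslash \G} r^{\pm}_h(\g; X)\cdot(g|\g)$-type series'' with $g\in M_k$ playing the role of $\mathcal E_{r,s}$. But the reason $\psi^{\pm}_{h;r,s}$ works is that $\mathcal E_{r,s}$ is \emph{itself} the $B\backslash\G$-sum of the automorphy factors $j(\g,z)^{-r}j(\g,\bar z)^{-s}$; for an arbitrary $g\in M_k$ one has $g|_k\g=g$, so the na\"ive series $\sum_{\g}r^{\pm}_h(\g;X)\,(g|_k\g)=g\sum_{\g}r^{\pm}_h(\g;X)$ diverges. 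The paper supplies the missing idea: write the given weight-$k$ form as a \emph{finite} linear combination $\sum_n\lambda_n P_n$ of classical Poincar\'e series (possible since $M_k$ is finite-dimensional and spanned by the $P_n$), and then, for each $n$, build
\[
G^{\pm}_{n,h}(z,X)=\sum_{\g\in B\backslash\G}\frac{r^{\pm}_h(\g;X)\,e^{2\pi i n\g z}}{j(\g,z)^{k}},
\]
whose absolute convergence for $k>k_1$ follows from Prop.~\ref{phi} together with $|e^{2\pi i n\g z}|\le 1$. One then checks $\bar\psi\bigl(\sum_n\lambda_n(G^{+}_{n,g}+G^{-}_{n,h})\bigr)=f\otimes(g,\bar h)$. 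Once you insert this Poincar\'e-series decomposition, your argument coincides with the paper's.
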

\begin{proof}
We apply Cor. \ref{full} to $V=\mathfrak O$, with $k_0:=2-k$ to deduce, in the first instance,
$$0 \longrightarrow H^0(\Gamma, \mathcal O \otimes P_{k_1-2})
 \overset{\bar \iota}{\longrightarrow} M_c^{(2)}(\mathfrak O) \overset{\bar \psi}{\longrightarrow}  
M_k \otimes  (S_{k_1} \oplus S_{k_1}) .$$
Here we used \eqref{coho:1} and \eqref{coho:2}.

The isomorphism $H^0(\Gamma, \mathcal O \otimes P_{k_1-2}) \cong M_k(\rho)$ is induced by the mapping
$$g=\sum_{j=0}^{k_1-2} f_j(z) X^j \longrightarrow (f_0, f_1, \dots, f_{k_1-2})^T.$$
 By a direct computation, we see that the invariance of $g$ under $S$ and $T$ is equivalent to the invariance of
the associated vector under $S$ and $T$ in terms of the representation $\rho.$

To prove the surjectivity, let $f \otimes (g, \bar h) \in M_k \otimes (S_{k_1} \oplus \bar S_{k_1})$. Assume that $f=\sum_{n \ge 0} \lambda_n P_n,$ where $P_n$ is the classical Poincar\'e series of weight $k$ given by
$$P_n(z)=\sum_{B \backslash \G} \frac{e^{2 \pi i n \g z}}{j(\g, z)^k}.$$
(Here $\lambda_n=0$, for all but finitely many integer $n \ge 0.$)
For $n \ge 0$ and $h \in S_{k_1}$, set
$$G_{n, h}^{\pm}(z, X):=\sum_{\g \in B \backslash \G} (F_f^{\pm} \cdot e^{2 \pi i n -})  
\underset{k,0, 2-k_1}{\big |} \g
- F^{\pm}_h(z, X) P_n=\sum_{\g \in B \backslash \G} \frac{
r^{\pm}_h(\g; X)e^{2 \pi i n \g z}}{j(\gamma, z)^{k}}.$$
Since $|e^{2 \pi i n \g z}| \le 1$, the absolute convergence of this series, for $k>k_1$ follows from the absolute convergence of $\phi^{\pm}_{k, 0}$ proved in Prop. \ref{phi}. The same proposition implies the polynomial growth of the polynomial coefficients of $\phi^{\pm}_{k, 0}$.
 
We then have, for all $\g \in \G,$
$$\sum_{n \ge 0} \lambda_n(G_{n, g}^{+}+G_{n, h}^-)\underset{k,0, 2-k_1}{\big |} (\g-1)=\sum_{n \ge 0} \lambda_n(r^+_g(\g; X)+r^-_h(\g; X)) P_n=(r^+_g(\g; X)+r^-_h(\g; X))f.
$$
By the definition of the map $\bar \psi$, we deduce that the image of $\sum \lambda_n (G_{n, g}^++G_{n, h}^-)$ is $f \otimes (g, \bar h)$.

 To deduce the dimension formula, we use Theorem 6.3 and Remark 6.4 of \cite{CF} to compute the dimension of $M_k(\rho)$. Indeed, $\rho$ is an even representation, dim$\rho=k_1-1$ and a direct computation gives
$$\text{Tr}(\rho(ST)^2)=\text{Tr}(\rho(ST))=\text{Tr}(\rho(S)\rho(T))=\sum_{i=0}^{k_1-2}(-1)^i \binom{i}{k_1-2-i}
\legendre{k_1-1}{3}$$
where $\legendre{\cdot}{\cdot}$ stands for the Legendre symbol. This is seen by noting that the sequence given by
$$a_n=\sum_{i, j \in \mathbb Z; i+j=n} (-1)^i\binom{i}{j}; \qquad \text{where $\binom{i}{j}=0$, unless $i \ge j \ge 0$}$$
satisfies the recurrence relation $a_n+a_{n-1}+a_{n-2}=0$ and, therefore, by induction, $a_n=\legendre{n+1}{3}$

We can also see, by induction, that, for $\xi=e^{\pi i /3}$ 
$$\frac{\xi^k}{1-\xi^2}+\frac{x^{2k}}{1-\xi^{-2}}=-\legendre{k-1}{3}.$$ Since the only eigenvalue of $\rho(T)$ is $1$ and the corresponding eigenspace has dimension $1$. Therefore there is one Jordan block with $1$ in the diagonal and, by  Th. 3.4. of \cite{CF}, we deduce that the trace for a standard choice of exponents for $\rho (T )$ is $0$. This, with Remark 6.3 of \cite{CF}, implies that, since $k>k_1$, the dimension of $M_k(\rho)$ is given by the formula of Th. 6.3 of \cite{CF}, which, by the preceeding remarks is
$$\frac{5+k}{12}(k_1-1)+\frac{i^{k+k_1-2}}{4}-\frac13 \legendre{k_1-1}{3} \legendre{k-1}{3}.$$
\end{proof}

\end{document}